\providecommand{\U}[1]{\protect\rule{.1in}{.1in}}
\newcommand{\et}{{\rm{\acute et}}}
\newcommand{\BA}{{\mathbb {A}}}
\newcommand{\BC}{{\mathbb {C}}}
\newcommand{\BG}{{\mathbb {G}}}
\newcommand{\BP}{{\mathbb {P}}}
\newcommand{\BQ}{{\mathbb {Q}}}
\newcommand{\BZ}{{\mathbb {Z}}}
\newcommand{\CC}{{\mathcal {C}}}
\newcommand{\CH}{{\mathcal {H}}}
\newcommand{\CM}{{\mathcal {M}}}
\newcommand{\Br}{{\mathrm{Br}}}
\newcommand{\Pic}{{\mathrm{Pic}}}
\newcommand{\Ker}{{\mathrm{Ker}}}
\renewcommand{\Im}{{\mathrm{Im}}}
\newcommand{\ra}{\rightarrow}
\newcommand{\iso}{\stackrel{\sim}{\rightarrow} }
\newcommand{\sbt}{\subset}
\newcommand{\bk}{\bar{k}}
\numberwithin{equation}{section}
\theoremstyle{remark}
\newtheorem{defi}{\rm{\textbf{D\'efinition}}}[section]
\newtheorem{exam}[defi]{\rm{\textbf{Example}}}
\newtheorem{rem}[defi]{\rm{\textbf{Remarque}}}
\theoremstyle{plain}
\newtheorem{thm}[defi]{\rm{\textbf{Th\'eor\`eme}}}
\newtheorem{cor}[defi]{\rm{\textbf{\textbf{Corollaire}}}}
\newtheorem{lem}[defi]{\rm{\textbf{Lemme}}}
\newtheorem{prop}[defi]{\rm{\textbf{\textbf{Proposition}}}}
\begin{document}

\title[]
{Cohomologie non ramifi\'ee de degr\'e 3 : vari\'et\'es cellulaires et   surfaces de del Pezzo de  degr\'e au moins 5}

\author{Yang CAO}

\address{Yang CAO \newline Laboratoire de Math\'ematiques d'Orsay
\newline Univ. Paris-Sud, CNRS, Univ. Paris-Saclay \newline 91405 Orsay, France}

\email{yang.cao@math.u-psud.fr}

\date{\today.}

\maketitle

\begin{abstract}
 Dans cet article,  o\`u le corps de base est un corps  
 de caract\'eristique z\'ero quelconque, pour $X$
 une vari\'et\'e g\'eom\'etriquement cellulaire, on \'etudie 
  le quotient du  troisi\`eme groupe de cohomologie non ramifi\'ee
 $H^3_{nr}(X,\BQ/\BZ(2))$ par sa partie constante.
 Pour $X$ une compactification lisse d'un torseur universel
 sur une surface g\'eom\'etriquement rationnelle, on montre que ce quotient est fini.
 Pour $X$ une surface de  del Pezzo de degr\'e $\geq 5$, on montre que ce
  quotient est trivial, sauf si $X$ est
une surface de del Pezzo de degr\'e 8 d'un type particulier.
  
 We consider geometrically cellular varieties  $X$ over an arbitrary field
 of characteristic zero. We study the quotient of the  third unramified cohomology group
  $H^3_{nr}(X,\BQ/\BZ(2))$ by its constant part.
  For $X$ a smooth compactification of a universal torsor over a geometrically
  rational surface, we show that this quotient if finite.
  For $X$ a del Pezzo  surface  of degree  $\geq 5$,
  we show that this quotient is zero, unless $X$
  is a  del Pezzo surface of degree 8 of a special type.
  \end{abstract}

\tableofcontents

\section{Introduction}

Soient $k$ un corps de caract\'eristique 0,   ${\bk}$ une cl\^oture alg\'ebrique et ${\Gamma_k}$ le groupe de Galois de  ${\bk}$ sur $k$.
 Pour une vari\'et\'e lisse $X$ sur $k$ et un faisceau \'etale $F$ sur $X$, on rappelle que \emph{la cohomologie non ramifi\'ee} de $X$ de degr\'e $n$ est le groupe 
$$H_{nr}^n(X,F):=H^0_{Zariski}(X,\CH^n(X,F)),$$
 o\`u $\CH^n(X,F)$ est le faisceau Zariski associ\'e au pr\'efaisceau $\{ U\sbt X\}\mapsto H^n_{\et}(U,F)$.
 Soit $F=\BQ/\BZ(j)$ le faisceau des racines de l'unit\'e tordu $j$ fois.
 Les  groupes $H_{nr}^n(X,\BQ/\BZ(j))$ sont des invariants $k$-birationnels des $k$-vari\'et\'es projectives lisses
 g\'eom\'etriquement connexes, r\'eduits \`a $H^n(k,  \BQ/\BZ(j))$ pour
 $X$ 
$k$-rationnelle,   c'est-\`a-dire $k$-birationnelle \`a un espace projectif.
(cf. \cite[Th\'eor\`eme 4.1.1 et Proposition 4.1.4]{CT95}).
Le groupe $H_{nr}^2(X,\BQ/\BZ(1))$ n'est autre que le groupe de Brauer de $X$, il a \'et\'e fort \'etudi\'e.
On s'est int\'eress\'e plus r\'ecemment au   groupe $H_{nr}^3(X,\BQ/\BZ(2))$. Le cas des coniques fut trait\'e par Suslin.
En dimension quelconque,
le quotient
$H_{nr}^3(X,\BQ/\BZ(2))/H^3(k,  \BQ/\BZ(2))$ est trivial pour  toute  quadrique lisse qui n'est pas une quadrique d'Albert
(Kahn, Rost, Sujatha, voir \cite[Thm 10.2.4 (b)]{Ka00}).

Dans cet article, nous nous int\'eressons aux surfaces g\'eom\'etriquement rationnelles les plus simples,
les  surfaces de del Pezzo de degr\'e au moins 5. Rappelons que l'indice $I(X)$ d'une $k$-vari\'et\'e $X$  est le pgcd des degr\'es sur $k$ des points ferm\'es. Si une surface de del Pezzo $X$
de degr\'e au moins 5  a un indice $I(X)=1$, alors elle a un $k$-point et elle est $k$-rationnelle (cf. Thm. \ref{t2}). On a donc alors
  $H^3(k,\BQ/\BZ(2))= H_{nr}^3(X,\BQ/\BZ(2))$.
  
  Nous nous int\'eressons ici au cas o\`u $X(k)$ est \'eventuellement vide.
Nous montrons (Th\'eor\`eme \ref{t1}) :

\begin{thm} Soit $X$ une $k$-surface de del Pezzo de degr\'e $\geq 5$.
Alors $\frac{H^3_{nr}(X,\BQ/\BZ(2))}{H^3(k,\BQ/\BZ(2))}=0$,
 sauf peut-\^etre si  $deg(X)=8$, $I(X)=4$ et  il existe des  coniques lisses $C_1$, $C_2$ sur $k$ telles que $X\iso C_1\times C_2$.
\end{thm}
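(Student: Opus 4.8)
Comme une surface de del Pezzo de degr\'e $\geq 5$ est g\'eom\'etriquement rationnelle, donc g\'eom\'etriquement cellulaire, le th\'eor\`eme g\'en\'eral sur les vari\'et\'es g\'eom\'etriquement cellulaires exprime le quotient $H^3_{nr}(X,\BQ/\BZ(2))/H^3(k,\BQ/\BZ(2))$ comme un sous-quotient explicite de $H^1(k,\Pic(\overline X)\otimes\BQ/\BZ(1))$ (modulo les diff\'erentielles de la suite spectrale de Hochschild--Serre et la condition non ramifi\'ee). Le plan est de calculer cet invariant degr\'e par degr\'e, selon l'action de $\Gk$ sur $\Pic(\overline X)$, c'est-\`a-dire sur la configuration des courbes exceptionnelles. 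D\`es que $I(X)=1$ la surface est $k$-rationnelle (Thm. \ref{t2}) et le quotient est nul ; on suppose donc $I(X)>1$. Or pour $d=5$ et $d=7$ on a toujours $I(X)=1$ : au degr\'e $7$, la $(-1)$-courbe centrale de la cha\^{\i}ne des trois courbes exceptionnelles est $\Gk$-stable et $k$-isomorphe \`a $\BP^1$, d'o\`u un $k$-point ; et au degr\'e $8$ du type \'eclat\'e de $\BP^2$ en un point, l'unique $(-1)$-courbe, $\Gk$-stable, se contracte sur $k$ en une surface de del Pezzo de degr\'e $9$ munie d'un $k$-point, donc isomorphe \`a $\BP^2$, de sorte que $X$ est $k$-rationnelle. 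Restent les cas $d=9$ avec $I(X)=3$, $d=6$, et $d=8$ du type $\BP^1\times\BP^1$.

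Pour $d=9$ et $d=6$, le module galoisien $\Pic(\overline X)$ est facteur direct d'un module de permutation : de rang $1$ \`a action triviale pour une surface de Severi--Brauer, issu de l'hexagone des six $(-1)$-courbes pour $d=6$. En passant \`a un torseur universel $\CT\to X$, dont le Picard g\'eom\'etrique est nul, le th\'eor\`eme cellulaire appliqu\'e \`a une compactification lisse de $\CT$, joint au r\'esultat de finitude pour de telles compactifications, ram\`ene le quotient pour $X$ au calcul du sous-quotient de $H^1(k,\Pic(\overline X)\otimes\BQ/\BZ(1))$ d\'ecrit ci-dessus ; la structure de module de permutation assure qu'apr\`es passage au quotient par la partie constante ce sous-quotient est nul. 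Le quotient est donc nul pour $d=9$ et $d=6$.

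Pour $d=8$ du type $\BP^1\times\BP^1$, la surface est une quadrique lisse de $\BP^3$ sur $k$ si et seulement si la classe $\CO(1,1)$ est d\'efinie sur $k$. Quand c'est le cas --- les deux familles de g\'en\'eratrices \'echang\'ees par une extension quadratique, ou bien $X\iso C_1\times C_2$ avec $[Q_1]+[Q_2]=0$ dans $\Br(k)$, $Q_i$ d\'esignant l'alg\`ebre de quaternions de la conique $C_i$ --- le r\'esultat de Kahn--Rost--Sujatha rappel\'e dans l'introduction s'applique, une quadrique de dimension $2$ n'\'etant jamais une quadrique d'Albert, et le quotient est nul. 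Reste le cas $X\iso C_1\times C_2$ avec $[Q_1]+[Q_2]\neq 0$, soit $\CO(1,1)$ non d\'efinie sur $k$. Si l'une des coniques, disons $C_1$, a un $k$-point, alors $X\iso\BP^1\times C_2$, et l'invariance de la cohomologie non ramifi\'ee par produit avec $\BP^1$ donne $H^3_{nr}(X,\BQ/\BZ(2))/H^3(k,\BQ/\BZ(2))\cong H^3_{nr}(C_2,\BQ/\BZ(2))/H^3(k,\BQ/\BZ(2))$, nul par le cas des coniques (Suslin). Sinon $Q_1$ et $Q_2$ sont non nulles et distinctes, ce qui \'equivaut \`a $I(X)=4$ : c'est exactement le cas exclu de l'\'enonc\'e.

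La principale difficult\'e r\'eside dans le degr\'e $8$ : il faut \'etablir que $\CO(1,1)$ est d\'efinie sur $k$ si et seulement si $[Q_1]+[Q_2]=0$ dans $\Br(k)$, crit\`ere qui d\'epartage pr\'ecis\'ement les surfaces justiciables de Kahn--Rost--Sujatha du cas exceptionnel, puis v\'erifier que ce dernier co\"{\i}ncide avec $I(X)=4$. La seconde difficult\'e est le calcul effectif, pour $d=6$ et $d=9$, du sous-quotient de $H^1(k,\Pic(\overline X)\otimes\BQ/\BZ(1))$ jusqu'\`a son annulation, le passage au torseur universel servant \`a en garantir la finitude et \`a le localiser dans la partie alg\'ebrique.
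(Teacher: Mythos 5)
Your proposal rests on a misidentification of the group that actually controls the quotient, and this is not a cosmetic slip. For a geometrically cellular variety with $H^1(k,\Pic(X_{\bk})\otimes\bk^{\times})=0$ (which holds for \emph{every} del Pezzo surface of degree $\geq 5$, Lemme \ref{l1}), le th\'eor\`eme \ref{tmain} embeds $H^3_{nr}(X,\BQ/\BZ(2))/H^3(k,\BQ/\BZ(2))$ into
$\CM(X)=\Ker\bigl(\BZ/I(X)\xrightarrow{\ d'(2)\ }H^2(k,\Pic(X_{\bk})\otimes\bk^{\times})\bigr)$,
a subquotient of $CH^2(X_{\bk})^{\Gamma_k}\cong\BZ$; the group $H^1(k,\Pic(X_{\bk})\otimes\bk^{\times})$ is the \emph{hypothesis} that must vanish, not the receptacle of the invariant. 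Your guiding principle, namely that a (stably) permutation structure on $\Pic(X_{\bk})$ forces vanishing, proves too much: $\Pic(X_{\bk})$ is a permutation module for every degree $8$ surface (Proposition \ref{degre8}), in particular for $C_1\times C_2$ with $I(X)=4$, where the quotient equals $\BZ/2$ (Exemple \ref{exampleH3nontrivial}). This invalidates your treatment of degrees $9$ and $6$. Moreover, for a Severi--Brauer surface with $X(k)=\emptyset$ the universal torsor you propose to pass to does not exist (the elementary obstruction is the class $[A]\neq 0$), and even when a universal torsor exists you give no argument relating $H^3_{nr}(X)$ to $H^3_{nr}(\CT^c)$ modulo constants. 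What is actually needed is: for $d=9$, Kahn's formula $d(2)(1_{CH^2(X_{\bk})})=2[A]\neq 0$, which makes $d'(2)$ injective on $\BZ/3$, hence $\CM(X)=0$; for $d=6$, the hexagon structure (Lemme \ref{l3}), the invariance of $\CM$ under birational morphisms of del Pezzo surfaces, and a transfer argument over extensions of degree dividing $2$ and $3$ showing $\CM(X)$ is killed by $2$ and by $3$.

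The degree $8$ analysis also has two concrete gaps. First, your dichotomy --- either $X$ is a quadric in $\BP^3$ (Kahn--Rost--Sujatha applies), or $X\iso C_1\times C_2$ with $[Q_1]+[Q_2]\neq 0$ --- is not exhaustive: by Proposition \ref{degre8} there is the type $X\iso R_{K/k}C$ with $K/k$ a quadratic \emph{field} extension, for which the obstruction to descending $\CO(1,1)$ is $\mathrm{cores}_{K/k}[C]\in\Br(k)$. When this corestriction is nonzero (for instance $k$ $p$-adic and $[C]$ the class of order $2$ in $\Br(K)$), $X$ is neither a quadric in $\BP^3$ nor a product of conics over $k$ (Galois exchanges the two rulings), yet the theorem asserts vanishing for it; your sketch says nothing about these surfaces, whereas the paper's Proposition \ref{2} treats them by a restriction--corestriction computation showing $\Im\,d(2)$ is $\BZ/2$ or $\BZ/4$, matched against $I(X)$ via the Lemmes \ref{lem2} and \ref{lemcont}. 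Second, your closing equivalence, that $[Q_1],[Q_2]$ nonzero and distinct is equivalent to $I(X)=4$, is false: by Albert's theorem $I(X)=4$ if and only if $Q_1\otimes Q_2$ is a division algebra, and two distinct nonzero quaternion algebras sharing a quadratic splitting field give $I(X)=2$. For those surfaces the theorem again asserts vanishing; this is exactly Proposition \ref{1}, via the formula $d(2)(e_1\cup e_2)=[C_1]e_2+[C_2]e_1$, and your argument leaves it unproven, instead wrongly absorbing these surfaces into the excluded case. The sound parts of your sketch (degrees $5$ and $7$, Kahn--Rost--Sujatha for genuine quadric surfaces, Suslin for $\BP^1\times C_2$) only cover cases that are easy in any approach; completing the proof requires, in one form or another, the explicit control of the differential $d(2)$ that is the heart of the paper's argument.
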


On construit une surface de del Pezzo  $X$ de degr\'e 8 sur
le corps $k=\BC(x,y,z)$ pour laquelle $\frac{H^3_{nr}(X,\BQ/\BZ(2))}{H^3(k,\BQ/\BZ(2))} \neq 0$
(Exemple   \ref{exampleH3nontrivial}).

\medskip

Pour les surfaces g\'eom\'etriquement rationnelles g\'en\'erales, nous montrons (Th\'eor\`eme \ref{torseuruniv1}):

\begin{thm} \label{torseuruniv}
Soit $X$ une $k$-surface projective, lisse, g\'eom\'etriquement rationnelle.
Soit $\mathcal{T} \to X$ un torseur universel sur $X$ et soit $\mathcal{T}^c$
une $k$-compactification lisse de  $\mathcal{T}$. Alors  le groupe
$H^3_{nr}(\mathcal{T}^c,\BQ/\BZ(2))/H^3(k, \BQ/\BZ(2))$ est fini.
\end{thm}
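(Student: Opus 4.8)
The plan is to use that $H^3_{nr}(-,\BQ/\BZ(2))$ is a $k$-birational invariant of smooth proper geometrically connected varieties, so that the group in question depends only on the $k$-birational class of $\mathcal{T}$ and I am free to replace $\mathcal{T}^c$ by any convenient smooth proper model. Since $X$ is geometrically rational and $\mathcal{T}\to X$ is a torsor under the N\'eron--Severi torus $T=\Hom(\Pic(\overline{X}),\BG_m)$, over $\overline{k}$ the map $\overline{\mathcal{T}}\to\overline{X}$ is a torsor under the split torus $\overline{T}\cong\BG_m^r$, hence Zariski-locally trivial; in particular $\overline{\mathcal{T}}$ is rational, so $\mathcal{T}^c$ is geometrically rational. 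The two facts I will exploit are the Colliot-Th\'el\`ene--Sansuc characterisation of universal torsors, namely $\Pic(\overline{\mathcal{T}})=0$ and $\overline{k}[\overline{\mathcal{T}}]^{\times}=\overline{k}^{\times}$.

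The first step is to produce a geometrically cellular smooth projective model. Let $K/k$ be a finite Galois extension with group $G$ splitting $T$, and choose a $G$-invariant smooth projective fan in the cocharacter lattice of $T$ (obtained by $G$-equivariant resolution of a $G$-invariant fan); this yields a smooth projective toric $k$-compactification $T\hookrightarrow T^c$. The contracted product $\mathcal{T}^c:=\mathcal{T}\times^{T}T^c$ is then a smooth projective $k$-variety, proper over $X$ with fibre $T^c$ and containing $\mathcal{T}$ as a dense open set. Over $\overline{k}$ it is a $\overline{T^c}$-bundle over the cellular surface $\overline{X}$, and toric bundles over a cellular base have cohomology of Tate type concentrated in even degrees (projective-bundle/Leray--Hirsch formulas); hence $\overline{\mathcal{T}^c}$ is cellular, $H^3_{\et}(\overline{\mathcal{T}^c},\BQ/\BZ(2))=0$, $\Br(\overline{\mathcal{T}^c})=0$, and $\mathrm{CH}^2(\overline{\mathcal{T}^c})$ is a finitely generated $G$-lattice. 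Moreover, because $\Pic(\overline{\mathcal{T}})=0$ and the only geometric units are constants, the localisation sequence shows that $\Pic(\overline{\mathcal{T}^c})$ is freely generated by the boundary divisors, i.e.\ is a permutation $G$-module; so $H^1(k,\Pic(\overline{\mathcal{T}^c}))=0$ by Shapiro's lemma and $\Br(\mathcal{T}^c)=\mathrm{im}\,\Br(k)$.

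Now I would feed this cellular model into the analysis of $H^3_{nr}$ for geometrically cellular varieties (Bloch--Ogus theory and the coniveau filtration combined with Galois descent, which organises the residues at all divisors). The quotient $H^3_{nr}(\mathcal{T}^c,\BQ/\BZ(2))/H^3(k,\BQ/\BZ(2))$ is then assembled from three pieces: the image of the cup product $\Br(\mathcal{T}^c)\cup k^{\times}$, a subquotient of the Galois cohomology of $H^3_{\et}(\overline{\mathcal{T}^c},\BQ/\BZ(2))$, and the cokernel of the cycle class map $\mathrm{CH}^2(\mathcal{T}^c)\to\mathrm{CH}^2(\overline{\mathcal{T}^c})^{\Gamma_k}$. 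The first piece — precisely the potentially infinite contribution, the one responsible for the product-of-conics exception in the del Pezzo theorem — vanishes modulo $H^3(k)$ here, since $\Br(\mathcal{T}^c)=\mathrm{im}\,\Br(k)$ forces every $\alpha\cup\{u\}$ to be the pullback of a class of $H^3(k)$. The second piece vanishes because $H^3_{\et}(\overline{\mathcal{T}^c},\BQ/\BZ(2))=0$. The third piece is finite, because $\mathrm{CH}^2(\overline{\mathcal{T}^c})$ is a finitely generated lattice on which $\Gamma_k$ acts through the finite group $G$, so its cokernel is controlled by a finite Galois-cohomology group. This yields the asserted finiteness.

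I expect the main obstacle to lie in the first two steps rather than the last. On the geometric side one must genuinely build the smooth projective cellular model: this requires a $G$-equivariant smooth projective toric compactification of the N\'eron--Severi torus, the verification that the contracted product $\mathcal{T}\times^{T}T^c$ is smooth and projective over $k$, and the identification of $\overline{\mathcal{T}^c}$ as cellular together with $\Pic(\overline{\mathcal{T}^c})$ as a permutation module. On the cohomological side the delicate point is the structural exact sequence itself: one must track the residue maps precisely enough to isolate the three pieces above and, crucially, to see that the only term that can be infinite for a general geometrically rational variety is the Brauer cup-product term — which the universal-torsor vanishing $\Pic(\overline{\mathcal{T}})=0$ annihilates.
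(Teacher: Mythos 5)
Your geometric reduction is exactly the one in the paper's proof of the Th\'eor\`eme \ref{torseuruniv1} : par invariance birationnelle on remplace $\mathcal{T}^c$ par $\mathcal{T}\times^{S}S^c$, o\`u $S^c$ est une compactification torique lisse \'equivariante du tore de N\'eron--Severi (c'est la r\'ef\'erence [CTHS]), on observe que cette vari\'et\'e est g\'eom\'etriquement cellulaire (Proposition \ref{universelcellulaire}), et que $\Pic(\mathcal{T}^c_{\bk})$ est un module de permutation ([CTS, Thm. 2.1.2]) --- votre argument par la suite de localisation, utilisant $\Pic(\overline{\mathcal{T}})=0$ et $\bk[\overline{\mathcal{T}}]^{\times}=\bk^{\times}$, est bien la bonne preuve de ce dernier point. (Remarque mineure : \og fibr\'e torique sur base cellulaire, donc cohomologie paire, donc cellulaire \fg{} n'est pas une inf\'erence valide ; la cellularit\'e vient de la trivialit\'e du torseur au-dessus de chaque cellule, c'est-\`a-dire $H^1(\BA^n,S)=0$, comme dans la preuve de la Proposition \ref{universelcellulaire}.)

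Le trou est dans l'\'etape cohomologique, et ce n'est pas seulement que votre \og suite exacte structurale \fg{} est affirm\'ee sans preuve : son premier terme est mal identifi\'e, et votre argument d'annulation pour ce terme ne suffirait pas. Pour une vari\'et\'e g\'eom\'etriquement cellulaire, l'outil qui fournit une telle d\'ecomposition est la suite spectrale motivique de Kahn $E_2^{p,q}(X,2)=H^{p-q}(k,CH^q(X_{\bk})\otimes\BZ(2-q))\Rightarrow H^{p+q}(X,\BZ(2))$, combin\'ee avec la suite exacte $0\to CH^2(X)\to H^4(X,\BZ(2))\to H^3_{nr}(X,\BQ/\BZ(2))\to 0$ ; elle donne (preuve du Th\'eor\`eme \ref{tmain}) la suite exacte
\begin{equation*}
E^{3,1}_{\infty}(X,2)\lra \frac{H^3_{nr}(X,\BQ/\BZ(2))}{\Im H^3(k,\BQ/\BZ(2))}\lra \CM(X)\lra H^4(k,\BQ/\BZ(2)),
\end{equation*}
de sorte que la partie au-del\`a du conoyau de l'application classe de cycle est un quotient d'un sous-quotient de $E_2^{3,1}(X,2)=H^1(k,\Pic(X_{\bk})\otimes\bk^{\times})$ --- et non l'image de $\Br(X)\cup k^{\times}$. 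Les cup-produits $\Br(X)\cup k^{\times}$ n'atteignent que l'image de $H^1(k,\Pic(X_{\bk}))\otimes k^{\times}$ dans $H^1(k,\Pic(X_{\bk})\otimes\bk^{\times})$, qui est en g\'en\'eral strictement plus petite ; et votre hypoth\`ese d'annulation, $\Br(\mathcal{T}^c)=\Im\Br(k)$ (d\'eduite de $H^1(k,\Pic(\overline{\mathcal{T}^c}))=0$ et $\Br(\overline{\mathcal{T}^c})=0$), ne tue pas $H^1(k,\Pic\otimes\bk^{\times})$ : pour un r\'eseau galoisien $M$, l'annulation de $H^1(k,M)$ n'entra\^ine pas celle de $H^1(k,M\otimes\bk^{\times})$. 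Par exemple, si $M$ est le dual d'un r\'eseau flasque (donc coflasque, avec $H^1(H,M)=0$ pour tout sous-groupe $H$), le groupe $H^1(k,M\otimes\bk^{\times})$ est le $H^1$ du tore flasque correspondant, qui mesure la $R$-\'equivalence et est non nul pour des tores de normes biquadratiques classiques. Votre \'etape \og la premi\`ere partie s'annule modulo $H^3(k)$ \fg{} a donc une vraie lacune. La r\'eparation est \`a port\'ee de main avec ce que vous avez d\'ej\`a \'etabli : puisque $\Pic(\overline{\mathcal{T}^c})$ est un module de permutation $\oplus_i\BZ[\Gamma_k/\Gamma_{k_i}]$, Shapiro et Hilbert 90 donnent $H^1(k,\Pic(\overline{\mathcal{T}^c})\otimes\bk^{\times})=\oplus_i H^1(k_i,\bk^{\times})=0$, ce qui est pr\'ecis\'ement l'hypoth\`ese du Th\'eor\`eme \ref{tmain} ; celui-ci plonge alors $H^3_{nr}(\mathcal{T}^c,\BQ/\BZ(2))/\Im H^3(k,\BQ/\BZ(2))$ dans le groupe fini $\CM(\mathcal{T}^c)$ (fini car $CH^2(\overline{\mathcal{T}^c})$ est un r\'eseau de type fini et le conoyau de la classe de cycle est de torsion), ce qui est exactement la preuve du papier.
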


\medskip

Pour le faisceau $\BZ/n(i)= \mu_{n}^{\otimes i}$ ou pour le complexe de faisceau $\BZ(i)$ dont la d\'efinition est rappel\'ee plus bas,
on note  $H^j(-,-)$ la cohomologie   \'etale. 
Pour une courbe conique  lisse $C$ sur $k$, on note $[C]\in \Br(k)$ sa classe dans le groupe de Brauer de $k$.

\section{Sur les vari\'et\'es cellulaires et leur cohomologie non ramifi\'ee}

On rappelle la d\'efinition d'une  vari\'et\'e cellulaire  \cite[D\'efinition 3.2]{K97}.

\begin{defi}
Un $k$-sch\'ema de type fini $X$ \emph{a une d\'ecomposition cellulaire} (bri\`evement: est \emph{cellulaire}) s'il existe un sous-ensemble ferm\'e propre $Z\sbt X$ tel que $X\setminus Z$ est isomorphe \`a un espace affine et $Z$ a une d\'ecomposition cellulaire.

Un $k$-sch\'ema de type fini $X$ est dit \emph{g\'eom\'e\-triquement cellulaire} si $X_{\bk}$ a une d\'e\-composition cellulaire.
\end{defi} 

\begin{prop}\label{universelcellulaire}
Soit $k$  un corps alg\'ebriquement clos.

 (1) Une surface projective, lisse, $k$-rationnelle est cellulaire.

(2) Une vari\'et\'e torique, lisse, projective sur $k$ est cellulaire.

(3) Soient $T$ un tore sur $k$ et $T^c$ une $T$-vari\'et\'e torique, lisse, projective.
Soient $X$ une vari\'et\'e cellulaire sur $k$ et  $Y\ra X$ un $T$-torseur.
 Alors $Y^c:=Y\times^TT^c$ est cellulaire.
\end{prop}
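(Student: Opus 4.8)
The plan is to isolate two elementary stability properties of cellular decompositions and then treat the three parts in turn, using the structure theory of rational surfaces for (1), the Bialynicki--Birula decomposition for (2), and a torsor-triviality argument for (3). Throughout I use crucially that $k$ is algebraically closed (so tori are split and rational surfaces have the expected minimal models).

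First I would record a \emph{gluing lemma}: if $X=U\sqcup W$ with $U$ open cellular and $W=X\setminus U$ closed cellular, then $X$ is cellular. The proof stacks the cells of $U$ above those of $W$: if $U=U_0\supset U_1\supset\cdots$ is a defining filtration, then $X=(U_0\cup W)\supset(U_1\cup W)\supset\cdots\supset W\supset W_1\supset\cdots$ is a filtration by closed subsets, each $U_i\cup W$ being closed because $\overline{U_i}\setminus U_i\subseteq X\setminus U=W$, and its successive complements are exactly the cells of $U$ followed by those of $W$ (note $U_i\setminus U_{i+1}$ is open in $U_i\cup W$ since $U_i=(U_i\cup W)\cap U$). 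From this I derive a \emph{bundle lemma}: if $X$ is cellular and $P\to X$ is a vector bundle, a projective bundle $\BP(E)$, or more generally a Zariski-locally trivial fibration whose fibre $F$ is cellular and which trivialises over each cell, then $P$ is cellular. Indeed, over the open top cell $U\cong\BA^n$ every vector bundle is free (bundles on affine space are trivial), so $P|_U\cong F\times\BA^n$, which is cellular whenever $F$ is; one then concludes by downward induction on the filtration of $X$ via the gluing lemma.

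For (1), over $k=\bk$ every smooth projective $k$-rational surface $S$ admits a birational morphism to a minimal model, $\BP^2$ or a Hirzebruch surface $\BF_n$, factoring as a finite sequence of blow-ups of closed points. The minimal models are cellular: $\BP^2$ directly, and $\BF_n=\BP(\CO\oplus\CO(n))$ by the bundle lemma. It then suffices to show that blowing up a closed point preserves the class of smooth cellular surfaces whose $2$-cell has complement an SNC tree of $\BP^1$'s. Writing $S=\BA^2\sqcup D$ with the $2$-cell open and $D$ a tree of $\BP^1$'s, I would distinguish whether the centre $p$ lies in the open cell, at a smooth point of $D$, or at a node of $D$, and check from the local picture (the blow-up of $\BA^2$ at a point being the total space of $\CO(-1)$ over $\BP^1$) that $\mathrm{Bl}_p S$ again decomposes as an open $\BA^2$ together with a tree of $\BP^1$'s, hence is cellular by the gluing lemma. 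This case analysis --- in particular tracking the boundary divisor through infinitely near blow-ups, where the centre need not lie in the top cell --- is the main obstacle.

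For (2) I would invoke the Bialynicki--Birula decomposition for a generic one-parameter subgroup $\lambda\colon\BG_m\to T$: its fixed points are the (isolated) torus-fixed points of the complete smooth toric variety, and since the variety is smooth and projective the attracting sets are affine spaces forming a filtrable, i.e.\ cellular, decomposition. Finally, for (3), write the cellular filtration of $X$ with cells $C_i\cong\BA^{n_i}$. Over $\bk$ the torus $T$ is split, so $T$-torsors are Zariski-locally trivial, and since $\Pic(\BA^{n_i})=0$ the torsor $Y$ is trivial over each $C_i$; hence $Y^c|_{C_i}=(Y\times^T T^c)|_{C_i}\cong T^c\times\BA^{n_i}$. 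By (2) the toric variety $T^c$ is cellular, so $T^c\times\BA^{n_i}$ is cellular, and a downward induction over the closed strata $Y^c|_{Z_i}$ --- each $Y^c|_{C_i}$ being open in $Y^c|_{Z_i}$ with closed cellular complement $Y^c|_{Z_{i+1}}$ --- assembles these through the gluing lemma to show that $Y^c$ is cellular.
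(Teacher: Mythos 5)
Your gluing lemma, your part (2) (via Bia{\l}ynicki--Birula rather than the combinatorial lemma of Fulton that the paper cites; both are valid), and your part (3) (the torsor is trivial over each cell since $T$ is split and $\Pic(\BA^{n})=0$, then glue; this is the paper's noetherian induction) are all correct. The problem is part (1), at precisely the step you yourself flag as ``the main obstacle'': you never prove that blowing up a point preserves your class of surfaces ``open $\BA^2$ whose complement is an SNC tree of $\BP^1$'s'', and this invariant is in fact the wrong one to propagate. When the centre $p$ lies on the boundary tree $D$ (at a node or at a smooth point of $D$) the invariant is trivially preserved; but when $p$ lies in the open cell $\BA^2$ --- a case you cannot avoid in general, since the centres of the blow-ups in the factorisation down to the minimal model are prescribed --- the exceptional curve $E$ is disjoint from $D$, so any new boundary divisor must contain a curve connecting $E$ to $D$. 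Concretely, the natural candidate is the strict transform of the closure in $S$ of a line $\ell\subset\BA^2$ through $p$, and nothing controls that closure: it is a complete rational curve, but it may be singular, may pass through nodes of $D$, or may meet $D$ non-transversally, depending on the (arbitrary) isomorphism of the open cell with $\BA^2$. So your induction hypothesis is not self-propagating, and the case analysis you defer is exactly where the argument breaks.

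The paper's way out is to drop all completeness and transversality requirements on the lower strata: cells are only locally closed, so no boundary divisor needs to be tracked. For $p$ in the open cell, the explicit chart computation ($xu=yv$ in $\BA^2\times\BP^1$) gives $\mathrm{Bl}_{(0,0)}\BA^2=\BA^2\sqcup\BA^1$, with open cell $D(v\neq 0)$ and closed cell the strict transform of a line through the origin; hence the new boundary of $\mathrm{Bl}_pS$ is $\BA^1\cup D$, which is cellular by the very definition ($\BA^1$ is open in it, with closed cellular complement $D$), no matter how the closure of that $\BA^1$ meets $D$. For $p$ in a lower cell $\BA^1$ or $\BA^0$ (the ``infinitely near'' case you feared), the preimage of that cell is $\BP^1\cup\BA^1$ meeting in one point $x'$, resp.\ $\BP^1$, which re-decomposes into cells as $(\BP^1\setminus\{x'\})\sqcup\BA^1$, resp.\ $\BA^1\sqcup\BA^0$, while the cells above and below $p$ are untouched by the blow-up. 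Combined with your gluing lemma this closes the induction with no global geometry at all; I would recommend replacing your SNC-tree bookkeeping by this argument.
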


\begin{proof} 
 Par \cite[Lemme, p. 103]{Ful}, on a l'\'enonc\'e (2).

Pour (3), par  r\'ecurrence noeth\'erienne, il suffit de montrer que si $X\iso \BA^n$ avec $n\in \BZ_{\geq 0}$, alors $Y^c$ est cellulaire. 
Dans ce cas,  on sait que l'on a $H^1(\BA^n,T)=0$, $Y\iso \BA^n\times T$ 
et donc $Y^c\iso \BA^n\times T^c$. Le r\'esultat d\'ecoule de l'\'enonc\'e (2).

Pour (1), on sait (cf.  \cite[Thm. III.2.3]{K2})  que si la surface $X$ est minimale, alors soit
$X$  est isomorphe   \`a $\BP^2$ soit  $X$  est fibr\'ee en $\BP^1$
au-dessus de $\BP^1$. De telles surfaces sont  cellulaires.
Il suffit  donc de montrer que si une  surface lisse $X$ est cellulaire, 
pour tout  $x\in X(k)$,  la surface  \'eclat\'ee $Y:=Bl_xX$ est cellulaire. 

Supposons que $X=\BA^2\cup Z$ est une d\'ecomposition cellulaire de $X$. 
Si $x\in \BA^2$, il suffit donc de montrer que $Y:=Bl_{(0,0)}\BA^2$ est cellulaire. La vari\'et\'e $Y\sbt \BA^2\times \BP^1$ est d\'efinie par 
l'\'equation $xu=yv$, o\`u 
$\BA^2=Spec\ k[x,y]$ et $\BP^1=Proj\ k[u,v]$. 
Alors $Z(v=0)\iso \BA^1$ et $D(v\neq 0)=Spec\ k[x,y,\frac{u}{v}]/(\frac{u}{v}\cdot x=y)\cong \BA^2$.

Si $x\in Z$, il existe un ouvert $U\sbt X$ et un ferm\'e $V\sbt X$ tels que $U$, $V$  soient cellulaires,
 $U\cap V=\emptyset$, $x\notin U\cup V$ 
 et  $X$ ait une d\'ecomposition cellulaire $X=U\cup \BA^1\cup V$ ou $X=U\cup \BA^0\cup V$.
Ainsi $Y\times_XU$ et $Y\times_XV$ sont cellulaires. 
 Dans le premier cas, $Y\times_X\BA^1=\BP^1\cup \BA^1$ avec $\BP^1\cap \BA^1=\{x'\}$, o\`u $\BP^1$ est le diviseur exceptionnel.
 On a donc $\BP^1\setminus \{x'\}\cong \BA^1$ et $(Y\times_X\BA^1)\setminus (\BP^1\setminus \{x'\})\cong \BA^1$.
Dans le deuxi\`eme  cas, on a  $Y\times_X\BA^0\cong \BP^1\cong \BA^1\cup \BA^0$.
 Le r\'esultat  en d\'ecoule.
\end{proof}

Soit de nouveau $k$ un corps de caract\'eristique z\'ero quelconque.
On utilise dans cet article le complexe motivique $\BZ(n)$ de faisceaux sur les vari\'et\'es lisses sur $k$
(Voevodsky), sous la forme donn\'ee par B. Kahn dans \cite[\S 2]{Ka11}).
Pour toute $k$-vari\'et\'e lisse $X$, dans la cat\'egorie d\'eriv\'ee, on a  $\BZ(n)=0$ pour $n<0$, $\BZ(0)=\BZ$, $\BZ(1)\iso \BG_m[-1]$
et une suite exacte (\cite[Prop. 2.9]{Ka11})
\begin{equation}\label{e1}
\xymatrix{0\ar[r]& CH^2(X)\ar[r]& H^4(X,\BZ(2))\ar[r]& H^3_{nr}(X,\BQ/\BZ(2))\ar[r]& 0.}
\end{equation}
On rappelle un th\'eor\`eme de Bruno Kahn :

\begin{thm}(\cite[Thm. 2.5]{Ka10})\label{tmain1}
Soit $X$ une $k$-vari\'et\'e lisse, int\`egre, g{\'e}om{\'e}triquement cellulaire.
Pour tout entier $n \geq 0$, on a une suite spectrale fonctorielle:
\begin{equation}\label{e2}
E^{p,q}_2(X,n)=H^{p-q}(k,CH^q(X_{\bk})\otimes \BZ(n-q))\Longrightarrow H^{p+q}(X,\BZ(n))
\end{equation}
 et on a un accouplement de suites spectrales :
\begin{equation}\label{e5}
E_r^{p,q}(m)\times E_r^{p',q'}(n)\ra E_r^{p+p',q+q'}(m+n),
\end{equation} 
tel que,  pour  $r=2$, l'accouplement est le cup-produit.
\end{thm}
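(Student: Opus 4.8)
The plan is to deduce everything from the motivic decomposition of a cellular variety over the algebraically closed field $\bk$, together with a descent/weight-filtration argument over $k$. \emph{First} I would recall the geometric input: if $X$ is smooth, integral and $X_{\bk}$ is cellular, then in $DM(\bk)$ one has a canonical splitting $M(X_{\bk}) \cong \bigoplus_{q} CH^q(X_{\bk}) \otimes \BZ(q)[2q]$, where each $CH^q(X_{\bk})$ is a free abelian group of rank equal to the number of cells of codimension $q$. This is proved by Noetherian induction along the cellular stratification: the localization (Gysin) triangle attached to an open cell $\cong \BA^c$ and its closed complement, combined with homotopy invariance, shows that each new cell contributes exactly one Tate summand $\BZ(c)[2c]$, and the successive extensions split because $\Hom_{DM(\bk)}(\BZ(a)[2a], \BZ(b)[2b+1]) = 0$.

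\emph{Second}, I would descend to $k$. Over $k$ the splitting above is generally unavailable, but the cellular stratification endows the geometric motive $M(X) \in DM(k)$ with a finite, functorial weight filtration whose weight-$q$ graded piece is $C_q \otimes \BZ(q)[2q]$, where $C_q := CH^q(X_{\bk})$ is now viewed as a $\Gamma_k$-lattice, i.e. as the Artin motive attached to a locally constant \'etale sheaf. The cleanest way to organise this is via Bondarko's weight structure on $DM_{gm}(k)$: a geometrically cellular variety is a mixed Tate motive, and its weight complex has exactly the Tate graded pieces above. Functoriality in $X$ and strictness of the filtration are built into the weight-structure formalism, so the geometric cells need not be Galois-stable.

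\emph{Third}, applying $\Hom_{DM(k)}(-,\BZ(n)[\bullet])$, which computes $H^{\bullet}(X,\BZ(n))$, to this filtered object produces the desired spectral sequence. The weight-$q$ graded piece contributes $\Hom_{DM(k)}(C_q \otimes \BZ(q)[2q], \BZ(n)[p+q])$; since $C_q$ is a finitely generated $\Gamma_k$-lattice (indeed a permutation module in the cases of interest, hence self-dual, so that $C_q$ rather than $C_q^{\vee}$ appears), the corresponding Artin motive is dualizable, and Voevodsky's cancellation theorem gives $\Hom_{DM(k)}(C_q \otimes \BZ(q)[2q], \BZ(n)[p+q]) \cong H^{p-q}(k, C_q \otimes \BZ(n-q))$. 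Reading this off as the $E_2$-page reproduces \eqref{e2}, with convergence to $H^{p+q}(X,\BZ(n))$ guaranteed by the finiteness of the filtration (note $\BZ(n-q)=0$ for $q>n$).

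\emph{Finally}, the pairing \eqref{e5} would be induced by the diagonal $X \to X \times X$ together with the product $\BZ(m) \otimes^{\mathbb{L}} \BZ(n) \to \BZ(m+n)$. Because the weight filtration is compatible with tensor products, these maps respect the filtrations and hence induce a pairing of the associated spectral sequences; tracking the construction through the graded pieces identifies the induced $E_2$-pairing with the cup product in Galois cohomology tensored with the intersection product on $CH^{\bullet}(X_{\bk})$, which is precisely the cup product. \emph{The hard part} is the second step: producing the weight filtration over $k$ with the correct graded pieces and verifying its functoriality and strictness, since the geometric splitting does not descend and one must genuinely invoke the weight-structure (or slice-filtration) machinery. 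The multiplicativity claim at $r=2$ in the last step is likewise delicate, requiring careful compatibility of the weight filtration with the product maps.
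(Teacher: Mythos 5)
First, a point of orientation: the paper does not prove this statement at all — it is quoted from Kahn (\cite[Thm. 2.5]{Ka10}, resting on \cite[Thm. 4.4]{K97}), and the appendix only recalls what a pairing of spectral sequences is. So your proposal must be measured against Kahn's construction, and against that standard it has a fundamental flaw: you are working in the wrong category. Throughout this paper $H^{j}(X,\BZ(n))$ denotes \emph{\'etale} hypercohomology of the motivic complex (this is stated explicitly before Section 2, and it is essential: in the exact sequence (\ref{e1}), if $H^4(X,\BZ(2))$ were Nisnevich motivic cohomology it would equal $CH^2(X)$ and $H^3_{nr}$ would vanish from the story). Your step 3 applies $\Hom_{DM(k)}(-,\BZ(n)[\bullet])$ in Voevodsky's Nisnevich category, which computes motivic cohomology $H^{\bullet}_{M}(X,\BZ(n))$, not $H^{\bullet}_{\et}(X,\BZ(n))$; likewise, Homs out of Artin--Tate objects there compute motivic cohomology of \'etale algebras, not the Galois hypercohomology $H^{p-q}(k,CH^q(X_{\bk})\otimes\BZ(n-q))$ of (\ref{e2}). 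This is not pedantry: the differentials this paper actually uses, $d(1)\colon\Pic(X_{\bk})^{\Gamma_k}\to\Br(k)$ and $d(2)$ with target $H^2(k,\Pic(X_{\bk})\otimes\bk^{\times})$, land in groups such as $\Br(k)=H^3_{\et}(k,\BZ(1))$, which are \emph{zero} in the Nisnevich theory ($H^j_{M}(k,\BZ(1))=H^{j-1}_{Zar}(k,\BG_m)=0$ for $j\geq 2$). As written, your spectral sequence has the wrong $E_2$-page and the wrong abutment; the entire argument must be run \'etale-locally, i.e.\ with the object $R\pi_*\BZ(n)_{\et}\in D^+(k_{\et})$ (equivalently in \'etale motives), which is where Kahn works.

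The second gap is the descent step, which you rightly call the hard part but then dispatch with a tool that does not apply. The claim ``a geometrically cellular variety is a mixed Tate motive over $k$'' is false: for an anisotropic conic $C$, mixed-Tate-ness would force $M(C)\cong\BZ\oplus\BZ(1)[2]$, hence a zero-cycle of degree $1$; and your own asserted graded pieces $C_q\otimes\BZ(q)[2q]$, with $C_q$ a nontrivial Galois lattice, are Artin--Tate rather than mixed Tate, so the claim is also internally inconsistent. Moreover, Bondarko's (Chow) weight structure cannot produce the filtration you want: a smooth projective variety is \emph{pure} of weight $0$ there, so its weight complex is $M(X)$ itself, not a complex with Tate graded pieces. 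What is actually needed is the slice/cellular Postnikov tower of $R\pi_*\BZ(n)_{\et}$ over $k$, and the genuine content of Kahn's theorem is precisely that the canonical splitting over $\bk$ (canonical because Homs between distinct twists $\BZ(a)[2a]$, $\BZ(b)[2b]$ and their relevant shifts vanish over $\bk$) descends to a \emph{filtration} over $k$ --- not to a splitting --- via a Galois-descent argument for towers resting on those same Hom-vanishing statements; the nontrivial $d_2$'s of the paper measure exactly the failure of the splitting to descend. Saying that ``functoriality and strictness are built into the weight-structure formalism'' replaces this argument by a reference to machinery that does not deliver it. (Smaller repairs: the closed strata of a cellular decomposition need not be smooth, so your Gysin-triangle induction should use motives with compact support $M^c$; $CH^q(X_{\bk})$ need not be a permutation module, so replace the self-duality claim by the perfect intersection pairing $CH_q(X_{\bk})\times CH^q(X_{\bk})\to\BZ$ valid for cellular varieties; and the $r=2$ multiplicativity requires Kahn's explicit compatibility of $\BZ(m)\otimes^{L}\BZ(n)\to\BZ(m+n)$ with these towers, which is proved, not formal.)
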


On trouvera dans l'appendice (\S \ref{accouplements}) des rappels sur l'accouplement de suites spectrales.

La diff\'erentielle
$E^{1,1}_2(X,1)\ra E^{3,1}_2(X,1)$
d\'efinit un homomorphisme :
$$d(1):  \Pic(X_{\bk})^{\Gamma_k} \to  \Br(k).$$
La diff\'erentielle 
$E^{2,2}_2(X,2)\ra E^{4,1}_2(X,2)$
d\'efinit un homomorphisme :
$$d(2):CH^2(X_{\bk})^{\Gamma_k} \to H^2(k,\Pic(X_{\bk})\otimes \bk^{\times}).$$

\begin{lem}\label{lemcont}
Soit $X$ une $k$-vari\'et\'e lisse, g\'eom\'etriquement
int\`egre, g{\'e}om{\'e}triquement cellulaire. 
Alors on a  $\Im(CH^2(X)\to CH^2(X_{\bk})^{\Gamma_k})\sbt \Ker(d(2))$.
\end{lem}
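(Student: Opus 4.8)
The plan is to exhibit the map $CH^2(X)\to CH^2(X_{\bk})^{\Gamma_k}$ of the statement as the restriction to $CH^2(X)$ of an edge homomorphism of the spectral sequence \eqref{e2}, composed with the cycle class inclusion $CH^2(X)\incl H^4(X,\BZ(2))$ coming from \eqref{e1}, and then to observe that this edge homomorphism factors through the group $E^{2,2}_\infty(X,2)$, which is contained in $\Ker(d(2))$.

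First I would record the shape of \eqref{e2} for $n=2$. Since $\BZ(2-q)=0$ in the derived category for $q>2$, one has $E^{p,q}_2(X,2)=0$ whenever $q>2$; in particular $E^{2,2}_2(X,2)=CH^2(X_{\bk})^{\Gamma_k}$, $E^{4,1}_2(X,2)=H^2(k,\Pic(X_{\bk})\otimes\bk^{\times})$, and $d(2)$ is the outgoing differential $d_2$ on $E^{2,2}_2(X,2)$. On the term $E^{2,2}$ there are no incoming differentials, their sources $E^{2-r,r+1}_r$ lying in the region $q>2$ and so vanishing; hence $E^{2,2}_{r+1}$ is the kernel of the outgoing $d_r$ on $E^{2,2}_r$ for every $r\ge 2$, and therefore
\[
E^{2,2}_\infty(X,2)\ \sbt\ E^{2,2}_3(X,2)=\Ker\bigl(d_2\colon E^{2,2}_2(X,2)\to E^{4,1}_2(X,2)\bigr)=\Ker(d(2)).
\]
Moreover $E^{p,q}_2(X,2)=0$ for $p<2$ on the antidiagonal $p+q=4$ (there $q>2$), so $E^{2,2}_\infty(X,2)$ is the top quotient of $H^4(X,\BZ(2))$ and there is an edge surjection $\varepsilon\colon H^4(X,\BZ(2))\to E^{2,2}_\infty(X,2)\sbt CH^2(X_{\bk})^{\Gamma_k}$.

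The heart of the argument, and the step I expect to be the main obstacle, is to identify the composite $CH^2(X)\incl H^4(X,\BZ(2))\xrightarrow{\ \varepsilon\ }CH^2(X_{\bk})^{\Gamma_k}$ with the base-change map. For this I would invoke the functoriality of the spectral sequence \eqref{e2} under the base change $k\incl\bk$, together with its computation over $\bk$. Over the algebraically closed field $\bk$ one has $\Br(\bk)=0$, $\Pic(\Spec\bk)=0$ and $H^4(\bk,\BZ(2))=0$; consequently, on the antidiagonal $p+q=4$ the only nonzero term of $E^{p,q}_2(X_{\bk},2)$ is $E^{2,2}_2(X_{\bk},2)=CH^2(X_{\bk})$, the differentials into and out of it vanish, and the edge map $H^4(X_{\bk},\BZ(2))\to E^{2,2}_\infty(X_{\bk},2)$ is the identity isomorphism $H^4(X_{\bk},\BZ(2))\iso CH^2(X_{\bk})$ (equivalently $H^3_{nr}(X_{\bk},\BQ/\BZ(2))=0$ by \eqref{e1}).

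Finally I would assemble the commutative square expressing compatibility of the edge maps with base change: the left vertical arrow is restriction $H^4(X,\BZ(2))\to H^4(X_{\bk},\BZ(2))$, the right vertical arrow is the inclusion of invariants $CH^2(X_{\bk})^{\Gamma_k}\incl CH^2(X_{\bk})$ induced on $E^{2,2}_2$-terms, the top arrow is $\varepsilon$, and the bottom arrow is the identity just described. Chasing a cycle class $\alpha\in CH^2(X)$ around this square shows that $\varepsilon(\alpha)$ coincides, inside $CH^2(X_{\bk})$, with the image of $\alpha$ under base change $CH^2(X)\to CH^2(X_{\bk})$, which is automatically $\Gamma_k$-invariant. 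Hence the base-change map is exactly $\varepsilon$ restricted to $CH^2(X)$, and its image lies in $E^{2,2}_\infty(X,2)\sbt\Ker(d(2))$, which is the assertion. The only delicate point is to justify the base-change functoriality of \eqref{e2} and the identification of $\varepsilon$ over $\bk$; once these are in hand, everything else follows formally from the vanishing $E^{p,q}_2(X,2)=0$ for $q>2$.
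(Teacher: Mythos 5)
Your proposal is correct and follows essentially the same route as the paper's own proof: both use the vanishing $E_2^{p,q}(X,2)=0$ for $q>2$ to obtain the edge map $H^4(X,\BZ(2))\to E^{2,2}_{\infty}(X,2)\sbt \Ker(d(2))$, and both identify the base-change map $CH^2(X)\to CH^2(X_{\bk})^{\Gamma_k}$ with the composite of the cycle class map $i_X$ from (\ref{e1}) and this edge map, via the functoriality square comparing $X$ over $k$ with $X_{\bk}$ over $\bk$. The only difference is presentational: you spell out explicitly that over $\bk$ the edge map is the identity on $CH^2(X_{\bk})$, a point the paper leaves implicit in its commutative diagram.
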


\begin{proof}
Puisque $\BZ(n)=0$ pour $n<0$, 
dans la suite spectrale (\ref{e2}), on a $E_2^{p,q}(X,2)=0$ pour $q>2$. 
Donc on a un morphisme canonique: $H^4(X,\BZ(2))\xrightarrow{d_X} E^{2,2}_{\infty}(X,2)$
 et une inclusion $E^{2,2}_{\infty}(X,2)\sbt E^{2,2}_2(X,2)$. 
Alors on a un diagramme commutatif:
$$\xymatrix{CH^2(X)\ar[r]^{i_X}\ar[d]&H^4(X,\BZ(2))\ar[r]^-{d_X}\ar[d]&E^{2,2}_{\infty}(X,2)\ar[r]\ar[d] &E^{2,2}_2(X,2)=CH^2(X_{\bk})^{\Gamma}\ar[d]\\
CH^2(X_{\bk})\ar[r]^{i_{X_{\bk}}}&H^4(X_{\bk},\BZ(2))\ar[r]^-{d_{X_{\bk}}}&E^{2,2}_{\infty}(X_{\infty},2)\ar[r]
& E^{2,2}_2(X_{\bk},2)=CH^2(X_{\bk}),
}$$
o\`u $CH^2(X)\xrightarrow{i_X}H^4(X,\BZ(2))$ d\'esigne le morphisme dans la suite exacte (\ref{e1}).
Donc 
$$\Im(CH^2(X)\to CH^2(X_{\bk})^{\Gamma_k})\sbt E^{2,2}_{\infty}(X,2)\sbt \Ker(d(2)).$$
\end{proof}

Notons d\'esormais
$\CM(X)$ l'homologie du complexe
\begin{equation}\label{defM}
CH^2(X)\to CH^2(X_{\bk})^{\Gamma_k} \xrightarrow{d(2)} H^2(k,\Pic(X_{\bk})\otimes \bk^{\times}).
\end{equation}
On note
$$ d'(2) : CH^2(X_{\bk})^{\Gamma_k}/\Im CH^2(X) \xrightarrow{d(2)} H^2(k,\Pic(X_{\bk})\otimes \bk^{\times})$$
 l'application induite par $d(2)$. On a $\CM(X) = \ker d'(2)$.

Le th\'eor\`eme suivant g\'en\'eralise \cite[Cor. 7.1 et 7.2]{K96}:

\begin{thm}\label{tmain}
Soit $X$ une $k$-vari\'et\'e lisse, g\'eom\'etriquement
int\`egre, g{\'e}om{\'e}triquement cellulaire. 
Si $H^1(k,\Pic(X_{\bk})\otimes \bk^{\times})=0$, 
 alors les groupes $\CM(X)$ et $\frac{H^3_{nr}(X,\BQ/\BZ(2))}{\Im H^3(k,\BQ/\BZ(2))}$ sont finis et
on a une suite exacte:
\begin{equation}\label{e6}
\xymatrix{0\ar[r]&\frac{H^3_{nr}(X,\BQ/\BZ(2))}{\Im H^3(k,\BQ/\BZ(2))}\ar[r]&\CM(X)\ar[r]&H^4(k,\BQ/\BZ(2)).
}
\end{equation}
\end{thm}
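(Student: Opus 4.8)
The plan is to run the spectral sequence (\ref{e2}) for $n=2$, read off $H^4(X,\BZ(2))$ through its convergence filtration, and then feed the result into the exact sequence (\ref{e1}). First I would record the shape of $E_2^{p,q}(X,2)$: since $\BZ(j)=0$ for $j<0$ and $CH^q(X_{\bk})=0$ for $q<0$, only the rows $q\in\{0,1,2\}$ survive, and the relevant entries are $E_2^{2,2}=CH^2(X_{\bk})^{\Gamma_k}$, $E_2^{3,1}=H^1(k,\Pic(X_{\bk})\otimes\bk^{\times})$, $E_2^{4,0}=H^4(k,\BZ(2))$ and $E_2^{5,0}=H^5(k,\BZ(2))$; because we use étale motivic cohomology and $H^i(k,\BQ(2))=0$ for $i>2$, one has $H^4(k,\BZ(2))\iso H^3(k,\BQ/\BZ(2))$ and $H^5(k,\BZ(2))\iso H^4(k,\BQ/\BZ(2))$. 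The only differentials touching the corner $(2,2)$ are $d_2=d(2)$ into $E_2^{4,1}$ and $d_3$ into $E_3^{5,0}$, with nothing mapping in, so $E_\infty^{2,2}=\Ker\bigl(d_3\colon\Ker d(2)\to E_3^{5,0}\bigr)$. The hypothesis $H^1(k,\Pic(X_{\bk})\otimes\bk^{\times})=0$ says $E_2^{3,1}=0$; this both trivialises the middle graded piece and, since the only $d_2$ into $(5,0)$ starts at $(3,1)$, yields $E_3^{5,0}=E_2^{5,0}=H^4(k,\BQ/\BZ(2))$. Hence the filtration on $H^4(X,\BZ(2))$ collapses to a two–step extension
$$0\to E_\infty^{4,0}\to H^4(X,\BZ(2))\xrightarrow{\,e\,}E_\infty^{2,2}\to 0,$$
where $E_\infty^{4,0}$ is a quotient of $H^4(k,\BZ(2))$ and $e$ is the edge map into $CH^2(X_{\bk})^{\Gamma_k}$.

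Next I would pin down the constant part. By the functoriality of the spectral sequence (\ref{e2}) applied to $X\to\Spec k$, the pullback $H^4(k,\BZ(2))\to H^4(X,\BZ(2))$ respects the filtrations and is the identity on the $q=0$ row at $E_2$, so its image is exactly $E_\infty^{4,0}=F^4H^4(X,\BZ(2))$. Now combine the displayed extension with (\ref{e1}): the map $CH^2(X)\to H^4(X,\BZ(2))$ is injective, and by the Lemme \ref{lemcont} its composite with $e$ is the natural map $CH^2(X)\to CH^2(X_{\bk})^{\Gamma_k}$, with image $A:=\Im\bigl(CH^2(X)\to CH^2(X_{\bk})^{\Gamma_k}\bigr)\sbt E_\infty^{2,2}$. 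Applying the snake lemma to the inclusion of $0\to CH^2(X)\cap E_\infty^{4,0}\to CH^2(X)\to A\to 0$ into the displayed extension gives
$$0\to E_\infty^{4,0}/\bigl(CH^2(X)\cap E_\infty^{4,0}\bigr)\to H^3_{nr}(X,\BQ/\BZ(2))\to E_\infty^{2,2}/A\to 0.$$
The left-hand term is precisely the image of $E_\infty^{4,0}$, i.e. the constant part $\Im H^3(k,\BQ/\BZ(2))$, so $\frac{H^3_{nr}(X,\BQ/\BZ(2))}{\Im H^3(k,\BQ/\BZ(2))}\iso E_\infty^{2,2}/A$. Since $A\sbt E_\infty^{2,2}=\Ker d_3$ and $\CM(X)=\Ker d(2)/A$, the differential $d_3$ descends to $\overline{d_3}\colon\CM(X)\to E_3^{5,0}=H^4(k,\BQ/\BZ(2))$ with $\Ker\overline{d_3}=\Ker d_3/A=E_\infty^{2,2}/A$, which is exactly the exact sequence (\ref{e6}).

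For finiteness I would argue that $\CM(X)$ is finitely generated and torsion. Because $X_{\bk}$ is cellular, $CH^2(X_{\bk})$ is free of finite rank and $\Gamma_k$ acts through a finite quotient, so $CH^2(X_{\bk})^{\Gamma_k}$ and all its subquotients, in particular $\CM(X)$, are finitely generated. To see it is torsion I would tensor everything with $\BQ$: one has $E_2^{4,0}\otimes\BQ=H^4(k,\BQ(2))=0$, the hypothesis gives $E_2^{3,1}=0$, and the two differentials leaving $E^{2,2}$ have torsion targets $H^2(k,\Pic(X_{\bk})\otimes\bk^{\times})$ and $H^4(k,\BQ/\BZ(2))$; hence $H^4(X,\BQ(2))=E_\infty^{2,2}\otimes\BQ=CH^2(X_{\bk})^{\Gamma_k}\otimes\BQ$. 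On the other hand (\ref{e1}) gives $CH^2(X)\otimes\BQ\iso H^4(X,\BQ(2))$ since $H^3_{nr}$ is torsion. Comparing the two identifications shows $A\otimes\BQ=CH^2(X_{\bk})^{\Gamma_k}\otimes\BQ$, so $CH^2(X_{\bk})^{\Gamma_k}/A$, and a fortiori $\CM(X)$, is finite. The finiteness of the left-hand group of (\ref{e6}) follows, being a subgroup of $\CM(X)$.

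I expect the main obstacle to be the bookkeeping that identifies the arrow $\CM(X)\to H^4(k,\BQ/\BZ(2))$ of (\ref{e6}) with the $d_3$–differential and its kernel with the non–constant part of $H^3_{nr}$. This rests on three points that must be coordinated: the functoriality statement pinning $\Im H^3(k,\BQ/\BZ(2))$ to $E_\infty^{4,0}$, the single vanishing $E_2^{3,1}=0$ used simultaneously to kill the middle graded piece and to compute $E_3^{5,0}$, and the verification that $A$ lands in $\Ker d_3$ (via the edge map) rather than merely in $\Ker d(2)$.
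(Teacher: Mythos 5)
Your proposal is correct and takes essentially the same route as the paper: both run Kahn's spectral sequence (\ref{e2}) for $n=2$, use the hypothesis $H^1(k,\Pic(X_{\bk})\otimes\bk^{\times})=0$ to kill the $E^{3,1}$ term, invoke the compatibility of Lemme \ref{lemcont} to place $\Im CH^2(X)$ inside $E^{2,2}_{\infty}$, and splice the result into the sequence (\ref{e1}); your filtration/snake-lemma bookkeeping and the identification of the arrow of (\ref{e6}) with the differential $d_3$ are just a more explicit form of the paper's quotient manipulations. The only divergence is the finiteness step, where the paper quotes the freeness of $CH^2(X_{\bk})$ (\cite[Lemme 3.3]{K97}) and directly asserts that $CH^2(X_{\bk})^{\Gamma_k}/\Im CH^2(X)$ is torsion (the usual restriction--corestriction fact), whereas you deduce this torsionness by tensoring the spectral sequence and (\ref{e1}) with $\BQ$; both arguments are valid.
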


\begin{proof}
Par la suite exacte (\ref{e1}), on a une suite exacte:
$$\xymatrix{CH^2(X)\ar[r]&\frac{H^4(X,\BZ(2))}{\Im H^4(k,\BZ(2))}\ar[r]&\frac{H_{nr}^3(X,\BQ/\BZ(2))}{\Im H^3(k,\BQ/\BZ(2))}\ar[r]&0.
}$$
Dans la suite spectrale (\ref{e2}), on a $E_2^{p,q}(X,2)=0$ pour $q>2$ ou $q<0$ et donc une suite exacte:
$$E^{3,1}_{\infty}(X,2)\to \frac{H^4(X,\BZ(2))}{\Im H^4(k,\BZ(2))}\to \Ker (d(2))\to E^{5,0}_2(X,2).$$
D'apr\`es le Lemme \ref{lemcont}, on a  une suite exacte:
$$
\xymatrix{E^{3,1}_{\infty}(X,2)\ar[r]&\frac{H_{nr}^3(X,\BQ/\BZ(2))}{\Im H^3(k,\BQ/\BZ(2))}\ar[r]&\CM(X)\ar[r]&H^4(k,\BQ/\BZ(2)).
}$$

Si $E^{3,1}_2(X,2)=H^1(k,\Pic(X_{\bk})\otimes \bk^{\times})=0$, on a $E^{3,1}_{\infty}(X,2)=0$ et donc la suite exacte (\ref{e6}).

Par \cite[Lemme 3.3]{K97}, $\Pic(X_{\bk})$ et $CH^2(X_{\bk})$ sont des $\BZ$-modules libres de type fini. 
Puisque $\frac{CH^2(X_{\bk})^{\Gamma_k}}{\Im CH^2(X)}$ est un groupe de torsion, le groupe $\frac{CH^2(X_{\bk})^{\Gamma_k}}{\Im CH^2(X)}$ est fini et donc $\CM(X)$ est fini.
\end{proof}

\begin{rem} 
Pour une $k$-vari\'et\'e lisse g\'eom\'etriquement connexe g\'eom\'etriquement cellulaire,
le groupe $H^0(X_{\bk}, {\mathcal K}_{2})$ est uniquement divisible.  Pour des g\'en\'eralisations
du th\'eor\`eme \ref{tmain} sous cette simple hypoth\`ese, on consultera \cite[Prop. 1.3 et Prop. 2.2]{CT1}. 
\end{rem}

\begin{thm} \label{torseuruniv1}
Soit $X$ une $k$-vari\'et\'e projective, lisse, g\'eom\'e\-triquement int\`egre et g\'eom\'e\-triquement cellulaire.
Soit $\mathcal{T} \to X$ un torseur universel sur $X$ et soit $\mathcal{T}^c$
une $k$-compactification lisse de  $\mathcal{T}$. Alors  le groupe
$H^3_{nr}(\mathcal{T}^c,\BQ/\BZ(2))/H^3(k, \BQ/\BZ(2))$ est fini.
\end{thm}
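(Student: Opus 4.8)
The plan is to exploit the $k$-birational invariance of $H^3_{nr}(-,\BQ/\BZ(2))$ in order to replace the given compactification by a convenient one, and then to apply Theorem \ref{tmain}. Any two smooth projective compactifications of $\mathcal{T}$ are $k$-birational, so the quotient in question is unchanged if I take $\mathcal{T}^c = \mathcal{T}\times^T T^c$, where $T$ is the N\'eron--Severi torus of the universal torsor (so that $X^*(T)\iso \Pic(X_{\bk})$ as $\Gk$-modules) and $T^c$ is a smooth projective equivariant compactification of $T$ defined over $k$; such a $T^c$ exists, arising from a $\Gk$-stable complete smooth fan $\Sigma$ in $X_*(T)\otimes\BR$. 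Applying Proposition \ref{universelcellulaire}(3) over $\bk$ shows that $\mathcal{T}^c$ is geometrically cellular, and it is plainly smooth, projective and geometrically integral. By Theorem \ref{tmain} it therefore suffices to check the single vanishing $H^1(k,\Pic(\mathcal{T}^c_{\bk})\otimes \bk^{\times})=0$, and everything comes down to identifying the $\Gk$-module $\Pic(\mathcal{T}^c_{\bk})$.

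The crucial step, and the one I expect to demand the most care, is to prove that $\Pic(\mathcal{T}^c_{\bk})$ is a permutation $\Gk$-module. I would start from the defining property of the universal torsor: since its type $X^*(T)\to\Pic(X_{\bk})$ is an isomorphism and $\bk[X]^\times=\bk^\times$ (as $X$ is projective and geometrically integral), the fundamental exact sequence for torsors under a torus (Colliot-Th\'el\`ene and Sansuc) yields $\bk[\mathcal{T}]^\times=\bk^\times$ and $\Pic(\mathcal{T}_{\bk})=0$. The boundary $\mathcal{T}^c_{\bk}\setminus\mathcal{T}_{\bk}$ is the union of the prime divisors $D_\rho$ indexed by the rays $\rho\in\Sigma(1)$, and the set $\Sigma(1)$ is permuted by $\Gk$. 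The localization sequence
\[
\bigoplus_{\rho\in\Sigma(1)}\BZ\,[D_\rho]\to \Pic(\mathcal{T}^c_{\bk})\to \Pic(\mathcal{T}_{\bk})=0
\]
is then surjective, and the kernel of its left-hand map is the group of principal divisors supported on the boundary, i.e.\ of divisors of elements of $\bk[\mathcal{T}]^\times=\bk^\times$, hence is zero. This exhibits $\Pic(\mathcal{T}^c_{\bk})$ as the permutation module $\bigoplus_{\rho}\BZ\,[D_\rho]$ on the Galois set $\Sigma(1)$.

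What remains is formal. Decomposing $\Sigma(1)$ into $\Gk$-orbits with stabilizers $\Gal(\bk/k_i)=\Gki$, the projection formula gives a $\Gk$-isomorphism $\Pic(\mathcal{T}^c_{\bk})\otimes\bk^\times\iso\bigoplus_i\Ind_{\Gki}^{\Gk}\bk^\times$, so that Shapiro's lemma produces
\[
H^1(k,\Pic(\mathcal{T}^c_{\bk})\otimes\bk^\times)\iso\bigoplus_i H^1(k_i,\bk^\times),
\]
which vanishes by Hilbert 90. Thus the hypothesis of Theorem \ref{tmain} is satisfied for $\mathcal{T}^c$, and that theorem immediately gives the finiteness of $H^3_{nr}(\mathcal{T}^c,\BQ/\BZ(2))/\Im H^3(k,\BQ/\BZ(2))$, which is the asserted quotient. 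The only genuine geometric input is the permutation-module structure of $\Pic(\mathcal{T}^c_{\bk})$; granting that, the vanishing of $H^1$ and the appeal to Theorem \ref{tmain} are routine.
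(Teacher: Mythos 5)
Your proof is correct and follows essentially the same route as the paper: replace the given compactification, via $k$-birational invariance of $H^3_{nr}$, by the contracted product $\mathcal{T}\times^S S^c$ with $S^c$ a smooth toric compactification of the N\'eron--Severi torus (the paper cites \cite[Cor. 1]{CTHS} for its existence), invoke Proposition \ref{universelcellulaire} for geometric cellularity, show that $\Pic(\mathcal{T}^c_{\bk})$ is a permutation $\Gamma_k$-module so that $H^1(k,\Pic(\mathcal{T}^c_{\bk})\otimes\bk^{\times})=0$, and apply Theorem \ref{tmain}. The only divergence is that where the paper simply cites \cite[Thm. 2.1.2]{CTS} for the permutation structure, you reprove it directly from the fundamental exact sequence for torsors (giving $\bk[\mathcal{T}]^{\times}=\bk^{\times}$ and $\Pic(\mathcal{T}_{\bk})=0$) and the localization sequence for the boundary divisors, which is a sound, self-contained substitute.
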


\begin{proof}
Soit $S$ le $k$-tore de groupe des caract\`eres le r\'eseau $\Pic(X_{\bk})$.
D'apr\`es \cite[Cor. 1]{CTHS}, il existe une $k$-compactification torique lisse $S^c$ de $S$.
Comme le groupe $H^3_{nr}(\mathcal{T}^c,\BQ/\BZ(2))$ est un invariant $k$-birationnel,
il suffit d'\'etablir le r\'esultat pour  $\mathcal{T}^c=\mathcal{T} \times^SS^c$.
D'apr\`es la proposition \ref{universelcellulaire}, $\mathcal{T}^c$ est alors une vari\'et\'e g\'eom\'etriquement cellulaire.
Par ailleurs, le module galoisien $\Pic(\mathcal{T}^c_{\bk})$ est un module de permutation \cite[Thm. 2.1.2]{CTS}.
On a donc $H^1(k, \Pic(\mathcal{T}^c_{\bk}) \otimes \bk^{\times})=0$. Une application du th\'eor\`eme
\ref{tmain} donne alors le r\'esultat.
\end{proof}

D'apr\`es la proposition \ref{universelcellulaire}, le th\'eor\`eme \ref{torseuruniv} est un cas sp\'ecial du th\'eor\`eme \ref{torseuruniv1}.

Pour appliquer le Th\'eor\`eme \ref{tmain} au calcul du groupe $\frac{H^3_{nr}(X,\BQ/\BZ(2))}{\Im H^3(k,\BQ/\BZ(2))}$, on a besoin de contr\^oler  l'application $CH^2(X_{\bk})^{\Gamma_k} \xrightarrow{d(2)} H^2(k,\Pic(X_{\bk})\otimes \bk^{\times})$.

Soit $X$ une $k$-vari\'et\'e lisse, int\`egre, g{\'e}om{\'e}triquement cellulaire.
L'accouplement  (\ref{e5}) pour $n=m=1$ donne   un diagramme commutatif (cf. \S 6) :
$$\xymatrix{E^{1,1}_2(X,1)\otimes E_2^{1,1}(X,1)\ar[r]^-{d_{\otimes}}\ar[d]^{\cup}&
(E^{3,0}_2(X,1)\otimes E_2^{1,1}(X,1))\oplus ( E_2^{1,1}(X,1)\otimes E^{3,0}_2(X,1))\ar[d]^{\cup+\cup}\\
E_2^{2,2}(X,2)\ar[r]^{d(2)}&E_2^{4,1}(X,2),
}$$
o\`u $d_{\otimes}=d(1)\otimes id+ id\otimes d(1)$.
C'est-\`a-dire que l'on a un diagramme commutatif:
\begin{equation}\label{e3}
\xymatrix{\Pic(X_{\bk})^{\Gamma_k}\otimes \Pic(X_{\bk})^{\Gamma_k}\ar[d]^{\cup_1}\ar[r]^-{d_{\otimes}}&
(\Br(k)\otimes \Pic(X_{\bk})^{\Gamma_k})\oplus (\Pic(X_{\bk})^{\Gamma_k}\otimes \Br(k))  \ar[d]^{\cup_2+\cup_2} \\
CH^2(X_{\bk})^{\Gamma_k} \ar[r]^{d(2)}&H^2(k,\Pic(X_{\bk})\otimes \bk^{\times}),
}
\end{equation}
o\`u $\cup_1$ est l'intersection et $\cup_2$ est le cup-produit 
$$H^2(k,\bk^{\times})\times H^0(k,Pic(X_{\bk}))\xrightarrow{\cup_2} H^2(k,\Pic(X_{\bk})\otimes \bk^{\times}).$$

\section{Surfaces de del Pezzo de degr\'e au moins 5}

Une surface projective, lisse, g\'eom\'etriquement connexe $X$ est appel\'ee
 \emph{surface de del Pezzo} si le faisceau anticanonique
 $-K_X$ est ample. Le degr\'e d'une telle surface $X$ est  $deg(X):=(K_X,K_X)$. 
Par \cite[Exercise 3.9]{K2}, $X$ est alors g\'eom\'etriquement rationnelle, on a $1\leq deg(X)\leq 9$ et $Pic(X_{\bk})\iso \BZ^{10-deg(X)}$.
Comme pour toute surface $X$ projective, lisse, g\'eom\'e\-triquement rationnelle, le degr\'e sur les z\'ero-cycles d\'efinit un isomorphisme $CH^2(X_{\bk})\iso \BZ$,
 et on a 
   $$ \frac{CH^2(X_{\bk})^{\Gamma}}{\Im CH^2(X)} = \BZ/I(X),$$ o\`u $I(X)$ d\'esigne l'indice de $X$.

   \medskip

Par les travaux de Enriques, Ch\^atelet,  Manin, 
 Swinnerton-Dyer (voir \cite[Section 4]{CT99} ou \cite[Th\'eor\`eme 2.1]{VA}), on a :

\begin{thm} \label{t2}
Soit $X$ une surface de del Pezzo de degr\'e $\geq 5$. 

(1) Si $X(k)\neq \emptyset$, alors $X$ est $k$-rationnelle;

(2) Si $deg(X)=5 $ ou $7$, alors $X(k)\neq \emptyset$. 
\end{thm}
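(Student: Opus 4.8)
The plan is to argue degree by degree. Recall that over $\bk$ the surface $X_{\bk}$ is either $\BP^1\times\BP^1$ (for $\deg(X)=8$ of quadric type) or the blow-up of $\BP^2$ at $9-\deg(X)$ points in general position, so that $\Gamma_k$ acts on the finite configuration of exceptional $(-1)$-curves, preserving intersection numbers and fixing the class $-K_X$. For part (1) the idea is to contract, $\Gamma_k$-equivariantly, a $\Gamma_k$-stable set of pairwise disjoint $(-1)$-curves so as to reach a $k$-minimal model, and then to use the rational point to show that model is $k$-rational. For part (2) the idea is to exhibit a rational point directly from the geometry of the $(-1)$-curves.

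For part (1) I would proceed as follows. If $\deg(X)=9$, then $X$ is a Severi--Brauer surface and a $k$-point forces $X\cong\BP^2_k$. If $\deg(X)=8$ with $X_{\bk}\cong\BP^1\times\BP^1$, the $\Gamma_k$-stable class $-\tfrac12 K_X$ embeds $X$ as a smooth quadric in $\BP^3$, and projection from a $k$-point gives a birational map to $\BP^2$. If $\deg(X)=8$ with $X_{\bk}$ the blow-up of $\BP^2$ at one point, the unique $(-1)$-curve is defined over $k$; contracting it yields a degree-$9$ Severi--Brauer surface to which the $k$-point descends, hence $\cong\BP^2_k$, so $X$ is $k$-rational. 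If $\deg(X)=7$, among the three $(-1)$-curves exactly one, say $L$, meets the other two; it is intersection-theoretically distinguished, hence $\Gamma_k$-stable, and the remaining two curves $E_1,E_2$ form a $\Gamma_k$-stable pair of disjoint $(-1)$-curves. Contracting $\{E_1,E_2\}$ $\Gamma_k$-equivariantly gives a degree-$9$ Severi--Brauer surface, handled as above. If $\deg(X)=6$, the six lines form a hexagon with two triangles of mutually disjoint lines: if $\Gamma_k$ does not interchange the triangles, one triangle is $\Gamma_k$-stable and its contraction gives a Severi--Brauer surface, while if $\Gamma_k$ interchanges them a $k$-point still yields $k$-rationality by a birational construction (for instance via a conic-bundle structure). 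Finally, if $\deg(X)=5$ there need be no $\Gamma_k$-stable contraction at all, so one argues birationally: a $k$-point in general position (which exists as soon as one $k$-point does, rational points then being Zariski-dense) gives, by a classical construction, a birational map to $\BP^2$.

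For part (2) I would argue as follows. When $\deg(X)=7$, the distinguished curve $L$ above is a smooth geometrically rational curve over $k$, i.e.\ a conic; since $L$ is a $(-1)$-curve, $-K_X\cdot L=1$, so the $\Gamma_k$-invariant class $-K_X$ restricts to a $k$-rational divisor class of degree $1$ on $L$. A conic carrying a $k$-rational divisor class of odd degree is isomorphic to $\BP^1_k$; hence $L(k)\neq\emptyset$ and a fortiori $X(k)\neq\emptyset$. When $\deg(X)=5$, the ten lines form the Petersen graph and $\Gamma_k$ acts through the group $S_5$ of its automorphisms; here there is in general no $\Gamma_k$-stable line or blow-down, yet $X(k)\neq\emptyset$ --- this is the classical theorem of Enriques, proved by extracting a $\Gamma_k$-invariant rational point from the combinatorics of the configuration (for instance via the five conic pencils and the associated degree-$5$ \'etale structure).

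The main obstacle is the degree-$5$ existence statement in part (2). In every other case a rational point is either given or produced by trivialising a Severi--Brauer surface or a quadric obtained from a Galois-equivariant contraction; but a quintic del Pezzo surface can be $k$-minimal, with $\Gamma_k$ acting without nonzero invariants on the rank-$4$ orthogonal complement of $-K_X$ in $\Pic(X_{\bk})$, so that no $(-1)$-curve and no blow-down is defined over $k$. The rational point must then be manufactured from the finer geometry of the ten lines and the $S_5$-action, which is the delicate classical input. A secondary difficulty is the "interchanged triangles" subcase of $\deg(X)=6$ in part (1), together with the verification that a single rational point suffices to locate one in general position for the degree-$5$ rationality argument.
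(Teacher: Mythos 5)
The paper offers no proof of this theorem: it is quoted as classical work of Enriques, Ch\^atelet, Manin and Swinnerton-Dyer, with references to \cite[Section 4]{CT99} and \cite[Th\'eor\`eme 2.1]{VA}. Your sketch follows the same classical skeleton as those references (Galois action on the $(-1)$-curves, equivariant contraction of stable sets of disjoint $(-1)$-curves, reduction to Severi--Brauer surfaces and quadrics), and the cases you actually flesh out are correct: degree $9$ is Ch\^atelet's theorem; in degree $8$ the blow-up type is handled by contracting the unique, hence Galois-stable, $(-1)$-curve, while for the quadric type the invariant class $-\tfrac12 K_X$ is represented by a line bundle over $k$ because the $k$-point kills the Brauer obstruction to descent (a step worth stating); and your degree-$7$ proof of (2) --- the middle line $L$ is intersection-theoretically distinguished, hence defined over $k$, and a genus-zero curve over $k$ carrying the degree-one class $-K_X|_L$ is $\BP^1_k$ --- is exactly the standard one.

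The genuine gaps are the two statements that carry all the content of the theorem, and you black-box both. The first is degree $5$ in part (2), Enriques' theorem, which you assert rather than prove. The parenthetical hint (``extract a $\Gamma_k$-invariant rational point from the combinatorics of the configuration, via the five conic pencils'') cannot be turned into a proof as stated: a quintic del Pezzo surface can be $k$-minimal, with no Galois-stable line, pencil or blow-down, and no rational point is visible from the $S_5$-action on the Petersen graph alone. All known proofs (Swinnerton-Dyer's explicit geometric construction, Skorobogatov's argument via the realization of $X$ as a linear section of $\mathrm{Gr}(2,5)$, Shepherd-Barron's, Koll\'ar's) require geometric input beyond this combinatorics; this is the heart of the theorem and it is missing.

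The second gap is in part (1) for degrees $5$ and $6$, where the rationality constructions are only named. For degree $5$ you invoke a ``classical construction'' without giving it (it is, for instance, the two-dimensional linear system of anticanonical curves with an assigned double point at a suitably general $k$-point, which maps $X$ birationally onto $\BP^2$), and the preliminary claim that one $k$-point forces Zariski density of $X(k)$ itself requires proof (unirationality), which you do not sketch. The degree-$6$ subcase where $\Gamma_k$ interchanges the two triangles is dispatched with ``a conic-bundle structure'' and no argument; the standard treatments are either Manin's torus argument (the complement of the hexagon of lines is a torsor under a two-dimensional torus, trivialized by the $k$-point, and two-dimensional tori are $k$-rational) or a reduction to the degree-$5$ case by blowing up a $k$-point off the lines. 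In sum, you have a correct outline of the classical proof with its easy cases done; relative to the paper, which only cites the literature here, that is no contradiction, but as it stands it is not a proof of the statement.
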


Soit $X$  une surface de del Pezzo  de degr\'e $\geq 5$. 
Si $X(k)\neq \emptyset$,  l'\'enonc\'e (1) implique que l'on a 
 $\frac{H^{i}_{nr}(X,\BQ/\BZ(j))}{\Im H^{i}(k,\BQ/\BZ(j))}=0$ pour tous entiers $i$ et $j$.
 En particulier  $\Br(X)/\Im \Br(k)=
 \frac{H^2_{nr}(X,\BQ/\BZ(1))}{\Im H^2(k,\BQ/\BZ(1))}=0$
  (voir aussi le Lemme \ref{l1}) et 
 $\frac{H^3_{nr}(X,\BQ/\BZ(2))}{\Im H^3(k,\BQ/\BZ(2))}=0$.
 
 En fait, soit $X$ une surface de del Pezzo de degr\'e au moins $4$, alors $I(X)=1$ implique $X(k) \neq \emptyset$.
 La question analogue est ouvere pour les del Pezzo de degr\'e 3, i.e. les surfaces cubiques.
Ceci n'utilise pas dans le pr\'esent article.

\begin{lem}\label{l1}
Soit $X$ une $k$-surface de del Pezzo de degr\'e $\geq 5$. Alors $\Pic(X_{\bar{k}})$ est stablemement de permutation, $H^1(k,\Pic(X_{\bk})\otimes \bk^{\times})=0$ et $\Br(X)/\Im\Br(k)=0$.
\end{lem}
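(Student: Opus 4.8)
The plan is to reduce both cohomological vanishings to the single geometric input that $M:=\Pic(X_{\bk})$ is stably permutation, and then to verify the latter by a case analysis on $\deg(X)$. Suppose first that $M$ is stably permutation, say $M\oplus P\cong Q$ with $P,Q$ permutation $\Gk$-modules. Applying the additive functor $-\otimes_{\BZ}\bk^{\times}$ preserves this decomposition, so $H^1(k,M\otimes\bk^{\times})$ is a direct summand of $H^1(k,Q\otimes\bk^{\times})$. Writing $Q=\bigoplus_i\BZ[\Gk/\Gamma_{k_i}]$ for finite separable extensions $k_i/k$, one has $\BZ[\Gk/\Gamma_{k_i}]\otimes\bk^{\times}\cong\Ind_{\Gamma_{k_i}}^{\Gk}\bk^{\times}$, so Shapiro's lemma and Hilbert 90 give $H^1(k,\BZ[\Gk/\Gamma_{k_i}]\otimes\bk^{\times})=H^1(k_i,\bk^{\times})=0$. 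Hence $H^1(k,M\otimes\bk^{\times})=0$, which is the second assertion. For the third, the Hochschild--Serre spectral sequence for $\BG_m$ on $X$, together with $\Br(X_{\bk})=0$ for the geometrically rational surface $X_{\bk}$, yields an injection $\Br(X)/\Im\Br(k)\hookrightarrow H^1(k,M)$; and the same summand argument, now with $H^1(k,\BZ[\Gk/\Gamma_{k_i}])=H^1(k_i,\BZ)=\Hom_{\mathrm{cont}}(\Gamma_{k_i},\BZ)=0$, shows $H^1(k,M)=0$, so $\Br(X)/\Im\Br(k)=0$.

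It remains to prove that $M$ is stably permutation, which is the heart of the matter. The Galois action on $\Pic(X_{\bk})\cong\BZ^{10-\deg(X)}$ preserves $K_X$, the intersection pairing and the finite set of $(-1)$-curves, so it factors through the Weyl group $W(\mathrm{E}_{9-\deg(X)})$, and I would argue degree by degree. For $\deg(X)=9$ one has $M=\BZ$ with trivial action, a permutation module. For $\deg(X)=8$, $M=\BZ^2$ and Galois either fixes or interchanges the two rulings, so $M$ is $\BZ\oplus\BZ$ or $\BZ[\Gk/\Gamma_{k'}]$, in either case a permutation module. For $\deg(X)=7$ the three $(-1)$-curves are the two exceptional divisors $E_1,E_2$ and the strict transform $L$ of the line through the two centres; since $3L=-K_X+E_1+E_2$ is Galois-invariant and $M$ is torsion-free, $\{L,E_1,E_2\}$ is a $\Gk$-stable basis on which Galois can only transpose $E_1$ and $E_2$, so $M$ is again a permutation module.

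The genuinely nontrivial cases are $\deg(X)=5$ and $\deg(X)=6$, and I expect the degree-$6$ case to be the main obstacle. For $\deg(X)=6$ the six $(-1)$-curves form a hexagon permuted by $\Gk$; letting $P_6$ be the rank-$6$ permutation module they span, the line classes generate $\Pic(X_{\bk})$ and one obtains an exact sequence $0\to\hat{T}\to P_6\to M\to 0$ with $\hat{T}$ of rank $2$. The difficulty is that $\hat{T}$ is not itself a permutation module (it is the character lattice of the associated two-dimensional torus, an $A_2$-type lattice twisted by the Bertini involution), so stable permutation must be extracted by an explicit comparison; I would produce an isomorphism $M\oplus P\cong Q$ using the permutation modules attached to the six lines, to the three pairs of opposite lines, and to $-K_X$. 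For $\deg(X)=5$ the ten $(-1)$-curves form the Petersen graph with $\Gk$ acting through $S_5=W(A_4)$, and $M$ is the classical stably-permutation $S_5$-lattice; here I would either reproduce the standard resolution or cite the known classification of Galois lattices of del Pezzo surfaces of degree $\ge 5$. Once stable permutation is established in all degrees, the first paragraph completes the proof; the only real work lies in the degree-$5$ and degree-$6$ lattice computations.
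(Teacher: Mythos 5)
Your first paragraph is correct and matches what the paper does implicitly: once one knows that $M=\Pic(X_{\bk})$ is stably permutation, Shapiro plus Hilbert 90 gives $H^1(k,M\otimes\bk^{\times})=0$, and Shapiro plus $H^1(k_i,\BZ)=0$, combined with Hochschild--Serre and $\Br(X_{\bk})=0$, gives $\Br(X)/\Im\Br(k)=0$. Your degree $9$, $8$ and $7$ cases are also fine, up to a computational slip in degree $7$: $-K_X+E_1+E_2$ equals $3H$, not $3L$ (the correct relation is $3L=-K_X-2E_1-2E_2$); the clean argument is simply that $L$ is the unique $(-1)$-curve meeting both others, so the Galois action permutes the basis $\{L,E_1,E_2\}$.

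The genuine gap is that stable permutation is never actually proved in degrees $5$ and $6$, and degree $6$ is exactly the hard case: such a surface may have no $k$-point, so no rationality argument is available, and your isomorphism $M\oplus P\cong Q$ ``using the six lines, the three pairs of opposite lines and $-K_X$'' is announced but never constructed, nor is any verification carried out for the subgroups of the dihedral group of order $12$ acting on the hexagon. (Degree $5$, by contrast, is not hard: by Th\'eor\`eme \ref{t2}(2) such a surface always has a $k$-point, hence is $k$-rationnelle, and stable permutation then follows from the birational invariance result \cite[Proposition 2.A.1]{CTS} --- a tool you never invoke, proposing instead to ``cite the known classification'', which is vague and close to circular.) The paper avoids all lattice computations by a class argument: it considers the class $\CC$ of del Pezzo surfaces of degree $\geq 5$ over all field extensions $K/k$, observes via Th\'eor\`eme \ref{t2} that every member with a $K$-point is $K$-rationnelle and hence has stably permutation Picard module by \cite[Proposition 2.A.1]{CTS}, and then applies the descent theorem \cite[Th\'eor\`eme 2.B.1]{CTS}, which says precisely that for such a class, stable permutation for the members possessing rational points implies it for all members, including the pointless degree $6$, $8$ and $9$ surfaces. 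That second tool (or, failing that, a completed explicit degree-$6$ lattice computation) is what your proposal is missing; as written, the proof is incomplete.
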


\begin{proof}
Soit $\CC$ la classe des surfaces $X/K$, pour $K$ corps extension  quelconque de $k$, de del Pezzo de degr\'e $\geq 5$. 
Par le Th\'eor\`eme \ref{t2}, si $X(K)\neq \emptyset$, alors $X$ est $K$-rationnelle, et donc $\Pic(X_{\bar{K}})$ est stablemement  de permutation comme $Gal(\bar{K}/K)$-module (\cite[Proposition 2.A.1]{CTS}). Par \cite[Th\'eor\`eme 2.B.1]{CTS} pour chaque $X/K\in \CC$, le $Gal(\bar{K}/K)$-module $\Pic(X_{\bar{K}})$ est 
stablemement de permutation. Alors $H^1(k,\Pic(X_{\bk})\otimes \bk^{\times})=0$ et $H^1(k,\Pic(X_{\bk}))=0$. 
Puisque $\Br(X_{\bk})=0$,  par la suite spectrale de Hochschild-Serre on obtient 
 $\Br(X)/\Im\Br(k)=0$.
\end{proof}

\begin{prop}\label{exactdP}
Soit $X$ une $k$-surface de del Pezzo de degr\'e $\geq 5$.  On a la suite exacte
\begin{equation}\label{e7}
 0 \to \CM(X) \to \BZ/I(X) \xrightarrow{d'(2)} H^2(k,\Pic(X_{\bk})\otimes \bk^{\times}))
\end{equation}
et la suite exacte
\begin{equation}\label{e8}
\xymatrix{0\ar[r]&\frac{H^3_{nr}(X,\BQ/\BZ(2))}{\Im H^3(k,\BQ/\BZ(2))}\ar[r]&\CM(X)\ar[r]&H^4(k,\BQ/\BZ(2)),
}
\end{equation}
o\`u $\CM(X)$ est l'homologie du complexe (\ref{defM}).
\end{prop}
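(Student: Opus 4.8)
The plan is to read off both exact sequences from the machinery already in place, since (\ref{e8}) is nothing but the conclusion of Theorem \ref{tmain} in the present situation, while (\ref{e7}) merely records the definition of $\CM(X)$ once the domain of $d'(2)$ is identified. The first task is therefore to check that Theorem \ref{tmain} applies to a del Pezzo surface $X$ of degree $\geq 5$. Such an $X$ is smooth, projective and geometrically connected, hence geometrically integral; by the cited result it is geometrically rational, so by Proposition \ref{universelcellulaire}(1) (applied over $\bk$) it is geometrically cellular. The one nontrivial hypothesis of Theorem \ref{tmain} is the vanishing $H^1(k,\Pic(X_{\bk})\otimes \bk^{\times})=0$, and this is exactly what Lemma \ref{l1} provides. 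Feeding these inputs into Theorem \ref{tmain} yields at once the finiteness of $\CM(X)$ and the exact sequence (\ref{e8}).

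For (\ref{e7}) I would simply unwind the definition of $\CM(X)$. By construction $\CM(X)=\ker d'(2)$, where $d'(2)$ is the map induced by $d(2)$ on the quotient $CH^2(X_{\bk})^{\Gamma_k}/\Im CH^2(X)$, the passage to the quotient being legitimate by Lemma \ref{lemcont}. One thus has the tautological exact sequence
\[
0 \to \CM(X) \to CH^2(X_{\bk})^{\Gamma_k}/\Im CH^2(X) \xrightarrow{d'(2)} H^2(k,\Pic(X_{\bk})\otimes \bk^{\times}).
\]
For a smooth projective geometrically rational surface the degree map identifies $CH^2(X_{\bk})\iso \BZ$, and the text records the resulting identification $CH^2(X_{\bk})^{\Gamma_k}/\Im CH^2(X)=\BZ/I(X)$. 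Substituting this into the middle term turns the displayed sequence into precisely (\ref{e7}).

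I do not expect a genuine obstacle: both statements are formal once Theorem \ref{tmain} is available, and the only input with real content is the vanishing of $H^1(k,\Pic(X_{\bk})\otimes \bk^{\times})$. That vanishing rests on $\Pic(X_{\bk})$ being stably a permutation Galois module, a point already established in Lemma \ref{l1}: over any extension $K$ on which $X$ acquires a rational point it becomes $K$-rational, and the descent result \cite[Th\'eor\`eme 2.B.1]{CTS} upgrades this to the stable-permutation property over $k$ itself. The remaining verifications — smoothness, geometric integrality and geometric cellularity of $X$ — are guaranteed by the definition of del Pezzo surfaces together with Proposition \ref{universelcellulaire}(1), so no further work is required beyond carefully citing the relevant earlier statements.
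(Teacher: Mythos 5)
Your proof is correct and follows exactly the paper's route: the paper's own proof is the one-line "Ceci résulte du théorème \ref{tmain} et du lemme \ref{l1}", and your write-up simply makes explicit the verifications (geometric cellularity via Proposition \ref{universelcellulaire}(1), the vanishing of $H^1(k,\Pic(X_{\bk})\otimes \bk^{\times})$ via Lemma \ref{l1}, and the identification $CH^2(X_{\bk})^{\Gamma_k}/\Im CH^2(X)=\BZ/I(X)$ already recorded in Section 3) that the paper leaves implicit.
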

\begin{proof}
Ceci r\'esulte  du th\'eor\`eme \ref{tmain} et du lemme \ref{l1}.
\end{proof}

\section{Formes tordues de $\BP^1 \times \BP^1$}

Rappelons (voir par exemple \cite[Exemples 3.1.3 et 3.1.4]{AB}) que l'on a :

\begin{prop}\label{degre8}
Soit $X$ une surface de del Pezzo de degr\'e 8 sur un corps $k$. Alors on
a l'une des possibilit\'es suivantes :
 
(1)  $X$ est un \'eclatement de $\BP^2_{k}$ en un $k$-point, et dans ce cas, $X(k)\neq \emptyset$.

(2) Il existe des coniques lisses $C_1$, $C_2$ sur $k$ telles que $X\iso C_1\times C_2$.

(3) Il existe une extension de corps $K/k$ de degr\'e $2$ et une conique $C$ sur $K$ tels que $X\iso R_{K/k}C$, o\`u $R_{K/k}$ d\'esigne la restriction \`a  la Weil de $K$ \`a $k$.

De plus, $\Pic(X_{\bk})$ est un $\Gamma_k$-module de permutation.
\end{prop}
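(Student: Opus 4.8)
Le plan est de se ramener aux deux types g\'eom\'etriques de surfaces de del Pezzo de degr\'e 8 sur $\bk$, puis de descendre \`a $k$. Sur $\bk$, une telle surface est soit la surface de Hirzebruch $\mathbb{F}_1=Bl_p\BP^2_{\bk}$, soit $\BP^1\times\BP^1$; ces deux cas se distinguent par une propri\'et\'e $\Gamma_k$-invariante de $\Pic(X_{\bk})\iso \BZ^2$, \`a savoir la parit\'e de la forme d'intersection (impaire pour $\mathbb{F}_1$, qui poss\`ede une unique $(-1)$-courbe; paire pour $\BP^1\times\BP^1$, qui n'en poss\`ede aucune). Je commencerais par enregistrer cette dichotomie, classique, et par rappeler que la classe canonique $K_X$ est $\Gamma_k$-invariante dans tous les cas.

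Dans le cas $\mathbb{F}_1$, l'unique $(-1)$-courbe $E\sbt X_{\bk}$ est $\Gamma_k$-stable, donc descend en une $(-1)$-courbe g\'eom\'etriquement int\`egre d\'efinie sur $k$. Je la contracterais sur $k$ (crit\`ere de Castelnuovo, ou descente galoisienne), obtenant une $k$-surface $S$ projective lisse avec $S_{\bk}\iso \BP^2_{\bk}$, c'est-\`a-dire une surface de Severi--Brauer, dont le centre d'\'eclatement est un unique point ferm\'e $p$, n\'ecessairement $k$-rationnel puisque $E$ l'est. Une vari\'et\'e de Severi--Brauer poss\'edant un point rationnel est d\'eploy\'ee, donc $S\iso \BP^2_k$ et $X\iso Bl_p\BP^2_k$ avec $p\in \BP^2(k)$: c'est le cas (1), et $X(k)\neq \emptyset$ (via la $\BP^1_k$ exceptionnelle). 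Ici $\Gamma_k$ fixe $E$ et $K_X$, donc fixe la classe de droite $\ell=(E-K_X)/3$, et agit trivialement sur $\Pic(X_{\bk})$, qui est un module de permutation (trivial).

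Dans le cas $\BP^1\times\BP^1$, \'ecrivons $\Pic(X_{\bk})=\BZ f_1\oplus \BZ f_2$ o\`u $f_1,f_2$ sont les classes des fibres des deux projections. Celles-ci sont caract\'eris\'ees intrins\`equement (les deux rayons extr\'emaux du c\^one nef, ou les classes des courbes irr\'eductibles d'auto-intersection $0$), donc $\Gamma_k$ permute $\{f_1,f_2\}$, d'o\`u un homomorphisme $\Gamma_k\to \BZ/2$ et une $k$-alg\`ebre \'etale de degr\'e $\leq 2$. S'il est trivial, les deux familles de droites sont d\'efinies sur $k$ et les projections descendent en des morphismes $p_i:X\to C_i$, o\`u $C_i$ est une conique lisse (forme de $\BP^1$); le morphisme $(p_1,p_2)$ est un isomorphisme sur $\bk$, donc sur $k$, ce qui donne $X\iso C_1\times C_2$, soit le cas (2), avec action galoisienne triviale (de permutation). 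S'il est non trivial, soit $K/k$ l'extension quadratique associ\'ee; sur $K$ on est dans le cas pr\'ec\'edent, $X_K\iso C'\times C''$, et l'\'el\'ement non trivial $\sigma\in \Gal(K/k)$ \'echange les deux facteurs, ce qui identifie $X$ \`a la restriction de Weil $R_{K/k}C'$, soit le cas (3); ici $\Pic(X_{\bk})\iso \BZ[\Gamma_k/\Gamma_K]$ est un module de permutation.

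L'obstacle principal est que, bien que les classes invariantes $\ell$ ou $f_i$ ne proviennent pas n\'ecessairement de v\'eritables fibr\'es en droites sur $X$ d\'efinis sur $k$ (il y a une obstruction potentielle dans $\Br(k)$ via $\Pic(X)\to \Pic(X_{\bk})^{\Gamma_k}\to \Br(k)$), les morphismes g\'eom\'etriques que j'utilise --- la contraction de l'unique $(-1)$-courbe et les deux fibrations --- sont canoniques et $\Gamma_k$-\'equivariants, donc descendent comme morphismes de $k$-sch\'emas ind\'ependamment de cette obstruction. Les derniers ingr\'edients non triviaux, le d\'eploiement d'une surface de Severi--Brauer ayant un point rationnel et la reconnaissance d'un produit \`a facteurs \'echang\'es comme restriction de Weil, sont standards; leur assemblage au cas par cas donne \`a la fois la trichotomie et l'assertion sur le module de permutation.
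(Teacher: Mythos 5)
Le texte de l'article ne d\'emontre pas cette proposition : il la rappelle en renvoyant \`a \cite[Exemples 3.1.3 et 3.1.4]{AB}. Votre proposition fournit donc une vraie d\'emonstration l\`a o\`u l'article se contente d'une citation, et elle suit la d\'emarche standard de la r\'ef\'erence : dichotomie g\'eom\'etrique entre $\mathbb{F}_1$ et $\BP^1\times\BP^1$ (d\'etect\'ee par la parit\'e de la forme d'intersection, qui est bien $\Gamma_k$-invariante), descente galoisienne de la contraction de l'unique $(-1)$-courbe dans le premier cas et des deux fibrations dans le second, puis identification \`a une restriction de Weil lorsque les deux r\'eglages sont \'echang\'es. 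Vos points de descente sont corrects : les cibles descendent comme \emph{formes} (surface de Severi--Brauer, coniques), ce qui contourne effectivement l'obstruction dans $\Br(k)$ que vous signalez ; quant \`a l'identification $X\iso R_{K/k}C'$, elle se justifie proprement par l'adjonction $\Hom_k(X,R_{K/k}C')=\Hom_K(X_K,C')$ appliqu\'ee \`a la projection $p_1\colon X_K\to C'$, le $k$-morphisme obtenu \'etant un isomorphisme apr\`es changement de base \`a $\bk$. Enfin, la lecture de $\Pic(X_{\bk})$ comme module de permutation dans chacun des trois cas est correcte.

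Une retouche est n\'ecessaire dans le cas $\mathbb{F}_1$ : vous justifiez la rationalit\'e du point ferm\'e $p$ par celle de $E$, ce qui est circulaire --- \`a ce stade, $E$ n'est qu'une forme de $\BP^1$ (une conique), dont rien ne garantit encore qu'elle poss\`ede un point rationnel. L'argument correct est direct et ne passe pas par $E$ : la courbe $E_{\bk}$ est connexe et contract\'ee sur un unique point g\'eom\'etrique, donc le point ferm\'e $p=f(E)$ a pour corps r\'esiduel $k(p)=k$, c'est-\`a-dire $p\in S(k)$ ; la surface de Severi--Brauer $S$ poss\'edant un point rationnel est alors d\'eploy\'ee, d'o\`u $S\iso\BP^2_k$ et $X\iso Bl_p\BP^2_k$, et c'est seulement a posteriori que l'on conclut $E\iso\BP^1_k$ et $X(k)\neq\emptyset$. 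Avec cette correction mineure, votre d\'emonstration est compl\`ete.
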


En fait, dans le cas o\`u $X\sbt \BP^3_k$ est un quadrique lisse, on a l'extension discriminant $K/k$ de degr\'e $2$ (peut-\^etre $K=k\times k$)
 et, pour toute section plane lisse $C \subset X$, on a $X\simeq R_{K/k}C_K$.
 Ceci n'utilise pas dans le pr\'esent article.

Dans le cas (2), on a:

\begin{prop}\label{1}
Soient $C_1$, $C_2$ deux   coniques  lisses sur $k$ et $X\iso C_1\times C_2$.
Supposons $X(k)=\emptyset$.  L'image de $d(2)$ est $\BZ/2$. Si $I(X)=2$, alors
$\CM(X)=0$ et $\frac{H^3_{nr}(X,\BQ/\BZ(2))}{\Im H^3(k,\BQ/\BZ(2))}=0$.
Si $I(X)=4$, alors $\CM(X)=\BZ/2$.

\end{prop}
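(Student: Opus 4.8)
The plan is to compute the map $d(2)$ explicitly via the commutative diagram (\ref{e3}), using that for $X \simeq C_1 \times C_2$ with each $C_i$ a conic, the geometric Picard group is $\Pic(X_{\bk}) \simeq \BZ \oplus \BZ$, generated by the classes $\ell_1, \ell_2$ of the two rulings (the pullbacks of a point under the two projections), with intersection form given by $\ell_1^2 = \ell_2^2 = 0$ and $\ell_1 \cdot \ell_2 = 1$. Since $X(k) = \emptyset$, neither conic has a rational point, so each $[C_i] \in \Br(k)$ is the nontrivial class of a quaternion algebra, killed by $2$. Both $\ell_1, \ell_2$ are $\Gamma_k$-invariant (the rulings are individually defined over $k$, as the factors $C_1, C_2$ are), so $\Pic(X_{\bk})^{\Gamma_k} = \BZ\ell_1 \oplus \BZ\ell_2$. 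The homomorphism $d(1) \colon \Pic(X_{\bk})^{\Gamma_k} \to \Br(k)$ should send $\ell_i$ to $[C_i]$, since $d(1)$ is the obstruction to lifting a geometric divisor class to a genuine Brauer element and $\ell_i$ is the class whose non-liftability is measured precisely by the conic $C_i$ having no rational point.

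First I would feed the pair $(\ell_1, \ell_2)$ into diagram (\ref{e3}). Going down-then-right, $\cup_1(\ell_1 \otimes \ell_2) = \ell_1 \cdot \ell_2 = 1 \in CH^2(X_{\bk})^{\Gamma_k} = \BZ$ (a single rational point of the product), and we then apply $d(2)$. Going right-then-down, $d_{\otimes}(\ell_1 \otimes \ell_2) = d(1)(\ell_1) \otimes \ell_2 + \ell_1 \otimes d(1)(\ell_2) = [C_1] \otimes \ell_2 + \ell_1 \otimes [C_2]$, and then we apply $\cup_2 + \cup_2$. Commutativity of (\ref{e3}) gives
\begin{equation}\label{plancomp}
d(2)(1) = [C_1] \cup \ell_2 + [C_2] \cup \ell_1 \in H^2(k, \Pic(X_{\bk}) \otimes \bk^{\times}),
\end{equation}
where each $\cup$ is the cup-product pairing $H^2(k, \bk^{\times}) \times H^0(k, \Pic(X_{\bk})) \to H^2(k, \Pic(X_{\bk}) \otimes \bk^{\times})$. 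Since $1 \in \BZ/I(X)$ is the image of the generator $1 \in CH^2(X_{\bk})^{\Gamma_k}$ and $[C_i]$ is $2$-torsion, the element in (\ref{plancomp}) is $2$-torsion, hence the image of $d(2)$ is killed by $2$. To show this image is exactly $\BZ/2$ and not $0$, I would argue that $[C_1]\cup \ell_2 + [C_2] \cup \ell_1$ is nonzero in $H^2(k, \Pic(X_{\bk})\otimes \bk^{\times}) \simeq H^2(k,\bk^\times)\ell_1 \oplus H^2(k,\bk^\times)\ell_2 = \Br(k)\ell_1 \oplus \Br(k)\ell_2$ (using that $\Pic(X_{\bk})$ is a permutation module, so the cohomology splits as a direct sum), where the element has components $([C_2], [C_1])$, both nonzero since $X(k)=\emptyset$.

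Once the image of $d(2)$ is identified as $\BZ/2$, the two cases follow from the exact sequence (\ref{e7}) of Proposition \ref{exactdP}, namely $0 \to \CM(X) \to \BZ/I(X) \xrightarrow{d'(2)} H^2(k, \Pic(X_{\bk})\otimes\bk^\times)$, where $d'(2)$ sends $1 \bmod I(X)$ to the element (\ref{plancomp}). If $I(X) = 2$, then $d'(2)$ sends the generator of $\BZ/2$ to a nonzero element, so $d'(2)$ is injective and $\CM(X) = \ker d'(2) = 0$; the sequence (\ref{e8}) then forces $\frac{H^3_{nr}(X,\BQ/\BZ(2))}{\Im H^3(k,\BQ/\BZ(2))} = 0$. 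If $I(X) = 4$, then $d'(2)\colon \BZ/4 \to H^2$ has image equal to $\BZ/2$ (the $2$-torsion computed above), so its kernel is $2\BZ/4 \simeq \BZ/2$, giving $\CM(X) = \BZ/2$.

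The main obstacle is the identification of $d(1)(\ell_i) = [C_i]$ and, more delicately, the verification that the element (\ref{plancomp}) is genuinely nonzero rather than vanishing by some cancellation in $H^2(k, \Pic(X_{\bk})\otimes \bk^\times)$. The cleanest route is to exploit that $\Pic(X_{\bk})$ is a permutation $\Gamma_k$-module (Proposition \ref{degre8}), so $H^2(k,\Pic(X_{\bk})\otimes\bk^\times)$ decomposes into a direct sum of copies of $\Br(k)$ indexed by the permutation basis $\{\ell_1,\ell_2\}$; the two summands then separate the contributions $[C_2]\ell_1$ and $[C_1]\ell_2$, and nonvanishing reduces to the nontriviality of the quaternion classes $[C_1],[C_2]$, which is exactly the hypothesis $X(k)=\emptyset$. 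I would also need to confirm that $d(1)$ really computes these Brauer classes, which I expect follows from the functoriality of the spectral sequence (\ref{e2}) applied to the projections $X \to C_i$ together with the known value of $d(1)$ for a single conic.
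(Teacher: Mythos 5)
Your route is essentially the paper's own: you take the basis $\ell_1,\ell_2$ of $\Pic(X_{\bk})$ given by pullback along the two projections, identify $d(1)(\ell_i)=[C_i]$ by functoriality of the spectral sequence applied to $p_i\colon X\to C_i$ together with the known conic case (the paper invokes \cite[Theorem 4.4 (i)]{K97} for precisely this), compute $d(2)(\ell_1\cup\ell_2)=[C_1]\ell_2+[C_2]\ell_1$ from diagram (\ref{e3}), and finish with the exact sequences of Proposition \ref{exactdP}. The identification $CH^2(X_{\bk})^{\Gamma_k}=\BZ\,(\ell_1\cup\ell_2)$, the splitting $H^2(k,\Pic(X_{\bk})\otimes\bk^{\times})\cong\Br(k)\ell_1\oplus\Br(k)\ell_2$, and the case analysis on $I(X)$ via (\ref{e7}) and (\ref{e8}) all match the paper's proof.

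There is, however, one incorrect deduction, harmless once repaired. You claim that $X(k)=\emptyset$ implies that \emph{neither} conic has a rational point, hence that \emph{both} classes $[C_1],[C_2]$ are nontrivial, and your nonvanishing argument for $[C_1]\ell_2+[C_2]\ell_1$ rests on both components $([C_2],[C_1])$ being nonzero. But $X(k)=C_1(k)\times C_2(k)$ is empty as soon as one factor is: the proposition also covers, say, $C_1\cong\BP^1_k$ and $C_2$ without rational point, in which case $[C_1]=0$ and your justification fails as stated. The conclusion nevertheless survives, because in the direct sum $\Br(k)\ell_1\oplus\Br(k)\ell_2$ the element $[C_1]\ell_2+[C_2]\ell_1$ is nonzero as soon as \emph{at least one} $[C_i]$ is nonzero, and this weaker condition is exactly equivalent to $X(k)=\emptyset$; the element being $2$-torsion, it then generates a group $\BZ/2$, which is all the rest of your argument uses. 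This is also how the paper phrases it: $\Im(d(2))=0$ if and only if $[C_1]=[C_2]=0$, and otherwise $\Im(d(2))=\BZ/2$. With this one-line correction your proof is complete and coincides with the paper's.
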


\begin{proof}
On a $\Pic(C_{i,\bk})^\Gamma_k\cong \Pic(C_{i,\bk})\cong \BZ$ pour $i=1,2$. On note $p_i: X\ra C_i$ la projection, et pour $p_i^*: \BZ\cong \Pic(C_{i,\bk})\ra \Pic(X_{\bk})$, on note $e_i:=p_i^*(1_{\BZ})$. 
Alors $\Pic(X_{\bk})^{\Gamma_k}\iso \Pic(X_{\bk})\iso \BZ e_1\oplus \BZ e_2$ et  $H^2(k,\Pic(X_{\bk})\otimes \bk^{\times})\iso \Br(k)e_1\oplus \Br(k)e_2$.

Pour $i=1,2$, on applique \cite[Theorem 4.4 (i)]{K97} \`a $E_2^{1,1}(-,1)\ra E_2^{3,0}(-,1)$.  On obtient  un diagramme commutatif:
$$\xymatrix{\Pic(C_{i,\bk})^{\Gamma_k}\ar[r]^{d(C_i)}\ar[d]^{p_i^*}&  \Br(k)\ar[d]^= \\
\Pic(X_{\bk})^{\Gamma_k}\ar[r]^{d(1)}&\Br(k).
}$$
Notons $[C_i]:=d(C_i)(1_{Pic(C_{i,\bk})})$. En utilisant le diagramme (\ref{e3}), on obtient :
\begin{equation}\label{e4}
d(2)(e_1\cup e_2)=(d(2)\circ \cup_1) 
(e_1\otimes e_2)=\cup_2([C_1]\otimes e_2)+\cup_2([C_2]\otimes e_1)=[C_1]e_2+[C_2]e_1.\end{equation}

On v\'erifie ais\'ement $CH^2(X_{\bk})^{\Gamma_k}\cong CH^2(X_{\bk})\iso \BZ (e_1\cup e_2)$. On obtient :
$\Im (d(2))=0$ si et seulement si $[C_1]=[C_2]=0$ et sinon $\Im (d(2))=\BZ/2$. 
On conclut alors avec la Proposition \ref{exactdP}.
\end{proof}

Dans  le cas (3), le lemme suivant est d\^u \`a Olivier Benoist:

\begin{lem}\label{lem2}
Soient $K/k$ une extension de corps de degr\'e 2, $C$ une conique lisse sur $K$ et $X\iso R_{K/k}C$ avec $X(k)=\emptyset$.
Supposons que $[C]\in \Im (\Br(k)\ra \Br(K))$. Alors $I(X)=2$.
\end{lem}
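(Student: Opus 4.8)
The plan is to compute $I(X)$ directly from the defining adjunction of the Weil restriction, squeezing $I(X)$ between $2$ and $4$ and then using the hypothesis $[C]\in\Im(\Br(k)\to\Br(K))$ to force the value $2$. The basic tool is that for every field extension $F/k$ one has $X(F)=(R_{K/k}C)(F)=C(F\otimes_k K)$; in particular $X(k)=C(K)$, so the assumption $X(k)=\emptyset$ says exactly that $[C]\neq 0$ in $\Br(K)$. First I would record two divisibilities. If $X$ had a closed point $x$ of odd degree $d$, then $k(x)\otimes_k K$ is a field of odd degree $d$ over $K$ and $x$ yields a point of $C$ over it; thus $[C]$ is split by an odd-degree extension of $K$, and since $[C]$ is $2$-torsion this forces $[C]=0$, a contradiction. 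Hence every closed point has even degree and $2\mid I(X)$. Conversely, a nonzero $2$-torsion class has index $2$, so $C$ has a closed point $P$ of degree $2$ over $K$; as $K\sbt M:=K(P)$ we get $M\otimes_k K\iso M\times M$, whence $X(M)\neq\emptyset$ and $X$ has a closed point of degree dividing $4$. Therefore $I(X)\in\{2,4\}$.

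It remains to exhibit a closed point of degree $2$. By the adjunction, any quadratic field $L/k$ with $L\neq K$ and $[C]_{LK}=0$ (where $LK=L\otimes_k K$ is a field) gives $X(L)\neq\emptyset$; the resulting closed point is not rational since $X(k)=\emptyset$, so it has degree exactly $2$, forcing $I(X)\mid 2$ and hence $I(X)=2$. Writing $[C]=\beta_K$ with $\beta\in\Br(k)$, multiplicativity of the index gives $\mathrm{ind}_k(\beta)\mid [K:k]\cdot\mathrm{ind}_K(\beta_K)=4$. When $\mathrm{ind}_k(\beta)\leq 2$ this is immediate: then $\beta=(a,b)_k$ is a quaternion class, $L:=k(\sqrt a)$ splits $\beta$, so $\beta_{LK}=[C]_{LK}=0$; moreover $a\notin k^{*2}$ (else $\beta=0$) and $a\notin\delta k^{*2}$ for $K=k(\sqrt\delta)$ (else $\beta_K=0$), so $L$ is a quadratic field distinct from $K$, and we are done.

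The remaining, and main, difficulty is the case $\mathrm{ind}_k(\beta)=4$, in which $\beta$ has no quadratic splitting field over $k$ at all, yet a biquadratic one containing $K$ is still needed. Here I would embed $K$ into the degree-$4$ central division algebra $D$ representing $\beta$ (possible because $\beta_K$ has index $2$, so $K$ halves the index) and pass to the centralizer $Q:=Z_D(K)$, a quaternion algebra over $K$ with $[Q]=\beta_K=[C]$. Producing the desired $L=k(\sqrt e)$ is then equivalent to finding a maximal subfield $M\sbt D$ with $K\sbt M$ and $M/k$ biquadratic, that is, a slot $e\in k^{*}$ of $Q$: such an $e$ gives $M=K(\sqrt e)=k(\sqrt\delta,\sqrt e)$, which splits $\beta$ and is biquadratic over $k$. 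The crux is therefore to show that the $\Gal(K/k)$-invariant quaternion $[C]=\beta_K$ admits a splitting field of the form $K(\sqrt e)$ with $e\in k^{*}$; I expect this to be the heart of the argument. I would attack it through the invariance of $[C]$ together with the corestriction relation $\mathrm{cor}_{K/k}([C])=\mathrm{cor}_{K/k}(\beta_K)=2\beta$, which is split by $K$ and hence of the shape $(\delta,c)_k$, or equivalently by descending the two-fold Pfister form of $Q$ — isometric to its $\sigma$-conjugate — in order to locate a represented value lying in $k^{*}$. Once such an $e$ is exhibited, the equality $I(X)=2$ follows from the two divisibilities established above.
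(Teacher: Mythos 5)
Your reduction and case division follow the paper's own proof quite closely: lift $[C]$ to $\beta\in\Br(k)$, note $\mathrm{ind}_k(\beta)\in\{2,4\}$, and in the index-$2$ case produce a quadratic $L/k$, $L\neq K$, with $[C]_{LK}=0$; your treatment of that case, and your two divisibility arguments (which make explicit the Springer-type reason why a degree-$2$ closed point forces $I(X)=2$, a point the paper leaves implicit), are correct. But the proof has a genuine gap exactly where you yourself locate "the heart of the argument". In the case $\mathrm{ind}_k(\beta)=4$ you correctly reduce everything to the claim that the centralizer quaternion algebra $Q=Z_D(K)$ admits a presentation $(e,f)_K$ with $e\in k^{\times}$ --- equivalently, that $D$ contains a maximal subfield containing $K$ and biquadratic over $k$ --- and then you do not prove it; you only name two possible lines of attack. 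That claim is a nontrivial theorem, and it is precisely what the paper invokes at this point: Albert's theorem (\cite[Thm.~5]{A32}, cf.\ \cite[Lem.~2.9.23]{J}) that a central division algebra of degree $4$ containing a quadratic subfield $K$ contains a biquadratic maximal subfield containing $K$. Without this (or an actual proof of it), the lemma is not established, since one cannot dodge the index-$4$ case: the existence of any lift of $[C]$ of index $\leq 2$ is essentially equivalent to the conclusion one is trying to prove.

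Neither of your sketched attacks closes the gap as stated. The corestriction identity $\mathrm{cor}_{K/k}[C]=2\beta$ together with $\mathrm{res}_K(2\beta)=2[C]=0$ does show $2\beta=(\delta,c)_k$ for some $c\in k^{\times}$, but when $\beta$ has exponent $4$ this nonzero class yields no slot of $Q$ in $k^{\times}$ by any formal manipulation. As for descending the norm form of $Q$: $\sigma$-invariance up to isometry does not imply descent to $k$ (Galois descent fails for isometry classes of forms and for algebras of fixed degree; note that $\Br(k)\to\Br(K)^{\sigma}$ is surjective here, yet the lift may have index $4$, which is exactly the difficulty), and even granting a descent, extracting a represented value in $k^{\times}$ in the right position is in substance Albert's theorem itself, not a shortcut around it. One further small slip: "a nonzero $2$-torsion class has index $2$" is false in general (biquaternion division algebras have exponent $2$ and index $4$); what you need, and what is true, is that $[C]$ is the class of a conic, hence of a quaternion algebra, so its index divides $2$. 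The repair is simple: quote Albert's theorem in the relative form above, after which your two divisibilities do finish the proof exactly as in the paper.
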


\begin{proof}
Soit $\alpha\in \Br(k)$ un \'el\'ement tel que $\alpha|_K=[C]\in\Br(K) $.
Puisque $[C]$ est d'indice $2$, l'indice de $\alpha$ est $2$ ou $4$.

Si $\alpha$ est d'indice $2$, il existe une extension $k'$ de degr\'e 2 de $k$ telle que $\alpha|_{k'}=0\in \Br(k')$.

Si $\alpha$ est d'indice $4$, on le repr\'esente par un $k$-corps gauche $D$ de degr\'e $4$. 
Ainsi $D$ est d\'eploy\'e sur une extension $L$ de degr\'e 2 de $K$. 
Alors $D$ contient une sous-alg\`ebre commutative isomorphe \`a $L$, 
donc a fortiori une sous-alg\`ebre commutative isomorphe \`a $K$.
Par un th\'eor\`eme d'Albert (\cite[Thm. 5]{A32}, cf. \cite[Lem. 2.9.23]{J}), 
il existe une extension $k'$ de degr\'e 2 de $k$ telle que $D$ contient une sous-alg\`ebre commutative isomorphe \`a $K':=k'\cdot K$.

Dans tout cas, il existe une extension $k'$ de degr\'e 2 de $k$ telle que $[C]|_{K'}=0\in \Br(K')$, o\`u $K'=k'\cdot K$.
Donc $X(k')\neq \emptyset$ et $I(X)=2$.
\end{proof}

\begin{rem}
Soient $K/k$ une extension de corps de degr\'e 2, $C$ une conique lisse sur $K$ et $X\iso R_{K/k}C$ avec $X(k)=\emptyset$.
Si $[C]\notin \Im (\Br(k)\ra \Br(K))$, alors $I(X)=4$.
Ceci sera montr\'e dans la d\'emonstration de la proposition \ref{2}.
\end{rem}

\begin{prop}\label{2}
Soient $K/k$ une extension de corps de degr\'e 2, $C$ une conique lisse sur $K$ et $X\iso R_{K/k}C$ avec $X(k)=\emptyset$.
Alors $\CM(X)=0$ et $\frac{H^3_{nr}(X,\BQ/\BZ(2))}{\Im H^3(k,\BQ/\BZ(2))}=0$.
\end{prop}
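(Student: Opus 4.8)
The plan is to deduce the statement from Proposition~\ref{exactdP}: by the exact sequence (\ref{e7}) it suffices to prove that the homomorphism $d'(2)\colon \BZ/I(X)\to H^2(k,\Pic(X_{\bk})\otimes\bk^\times)$ is injective, for then $\CM(X)=0$, and the exact sequence (\ref{e8}) forces $H^3_{nr}(X,\BQ/\BZ(2))/\Im H^3(k,\BQ/\BZ(2))=0$ as well. I first make both sides explicit. Geometrically $X_{\bk}\iso\BP^1\times\BP^1$, and the non-trivial element $\sigma\in\Gal(K/k)$ exchanges the two rulings $f_1,f_2$; hence $\Pic(X_{\bk})=\Ind_{\Gamma_K}^{\Gamma_k}\BZ$ is a permutation module, and by the projection formula together with Shapiro's lemma the target is $H^2(k,\Pic(X_{\bk})\otimes\bk^\times)\iso H^2(K,\bk^\times)=\Br(K)$. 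On the source side $CH^2(X_{\bk})^{\Gamma_k}/\Im CH^2(X)=\BZ/I(X)$ is generated by the class $[pt]=f_1\cdot f_2$ of a closed point.

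Next I would compute $d(2)([pt])$ by passing to $K$. Over $K$ the Weil restriction splits as a product $X_K\iso C\times C^\sigma$ of $C$ with the conjugate conic $C^\sigma=C\times_{K,\sigma}K$, and $X_K(K)=\emptyset$ since $[C]\neq 0$ (otherwise $C(K)=X(k)$ would be non-empty). Thus Proposition~\ref{1}, and in particular the formula (\ref{e4}), applies and gives $d_K(2)([pt])=[C]\,e_2+[C^\sigma]\,e_1$ inside $H^2(K,\Pic(X_{\bk})\otimes\bk^\times)=\Br(K)e_1\oplus\Br(K)e_2$. The spectral sequence of Theorem~\ref{tmain1} being functorial in the base field, base change along $k\subset K$ yields a commutative square comparing $d(2)$ with $d_K(2)$, in which $[pt]$ maps to $[pt]$ and the right-hand vertical map is the restriction $\Br(K)\to\Br(K)e_1\oplus\Br(K)e_2$, $\beta\mapsto(\beta,\sigma(\beta))$, which is injective. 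This pins down $d(2)([pt])$ in terms of $[C]$.

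The argument then divides along Lemme~\ref{lem2}. If $[C]\in\Im(\Br(k)\to\Br(K))$, that lemma gives $I(X)=2$, and since the restriction of $d(2)([pt])$ is $[C]\neq 0$ the element $d(2)([pt])$ is non-zero, so $d'(2)\colon\BZ/2\to\Br(K)$ is injective. If $[C]\notin\Im(\Br(k)\to\Br(K))$, I would first prove the assertion of the Remark that $I(X)=4$: this is the reverse of Lemme~\ref{lem2}, and I expect to obtain it by showing that a closed point of degree $2$ on $X=R_{K/k}C$ would provide a quadratic extension $k'=k(\sqrt d)/k$ with $K(\sqrt d)$ splitting $C$, and then, by the Albert-type argument of Lemme~\ref{lem2}, that the existence of such a $d\in k^\times$ would force $[C]$ to lie in $\Im(\Br(k)\to\Br(K))$. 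The hard part will be the remaining point in this case, namely to show that $\CM(X)=0$ still holds, equivalently that $d'(2)\colon\BZ/4\to\Br(K)$ is injective so that $d(2)$ records the full index. This is precisely where the Weil-restriction case must diverge from the product case of Proposition~\ref{1}; I expect to control it by bringing in the corestriction $\mathrm{Cor}_{K/k}$ and its compatibility with $d(2)$, the $\sigma$-conjugation structure on $\Br(K)$ (relating $[C]$ and $[C^\sigma]$) being exactly what distinguishes a pair of conjugate conics from the arbitrary pair allowed in Proposition~\ref{1}.

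Granting injectivity of $d'(2)$ in both cases, Proposition~\ref{exactdP} gives $\CM(X)=0$, and then the exact sequence (\ref{e8}) immediately yields $H^3_{nr}(X,\BQ/\BZ(2))/\Im H^3(k,\BQ/\BZ(2))=0$, as desired.
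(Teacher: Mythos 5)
Your first two paragraphs (the reduction to injectivity of $d'(2)$, the identifications via Shapiro, and the computation of $d(2)_K([pt])$ over $K$) are correct, and your case $[C]\in\Im(\Br(k)\to\Br(K))$ is complete and agrees with the paper's treatment. The genuine gap is the other case, which you honestly flag as ``the hard part'' --- and the problem is that it cannot be closed along the lines you propose, because your own computations already rule it out. You correctly write $d(2)_K([pt])=[C]e_2+[C^\sigma]e_1$ with $[C^\sigma]=\sigma([C])$, and you correctly note that the restriction $H^2(k,M)\to H^2(K,M)$, $M=\Pic(X_{\bk})\otimes\bk^\times$, is, via Shapiro, the injective map $\beta\mapsto(\beta,\sigma(\beta))$. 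But $d(2)_K([pt])$ is a sum of conic classes, hence killed by $2$; injectivity of restriction then forces $2\,d(2)([pt])=0$ (concretely, $d(2)([pt])$ corresponds to $\sigma([C])\in\Br(K)$ under your identification). So $d'(2)\colon\BZ/4\to\Br(K)$ is \emph{never} injective, and the corestriction cannot rescue this: with the transfer formula $tr(a,b)=a+\sigma(b)$ one gets $tr(d(2)_K([pt]))=[C^\sigma]+\sigma([C])=2\sigma([C])=0$ unconditionally. Consequently, if $I(X)=4$ occurs then $\CM(X)=\Ker\, d'(2)=\BZ/2\neq 0$; and $I(X)=4$ does occur, e.g. $K=\BC(x,y,z,w)$ with $\sigma$ exchanging $x\leftrightarrow z$, $y\leftrightarrow w$, $k=K^\sigma$, $[C]=(x,y)$: then $[C^\sigma]=(z,w)$, the biquaternion algebra $(x,y)\otimes(z,w)$ is division, so $X_K=C\times C^\sigma$ has index $4$ and hence so does $X$. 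Your other sub-claim (the Remark: $[C]\notin\Im(\Br(k)\to\Br(K))\Rightarrow I(X)=4$) also fails: a degree-$2$ point only gives $[C]=(d,b)_K$ with $d\in k^\times$, $b\in K^\times$, which does not force $\sigma$-invariance of $[C]$; for instance $k=\BQ(x,y)$, $K=k(\sqrt{x})$, $[C]=(y,1+\sqrt{x})$ has $I(X)=2$ while $[C]+\sigma([C])=(y,1-x)\neq 0$.

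You should also know that the point where you got stuck is exactly where your attempt and the paper's proof part ways, and your bookkeeping is the correct one. The paper asserts $d(2)_K(1)=([C],[C])$, i.e.\ it drops the conjugation on the class of the second factor $C^\sigma$, computes $tr([C],[C])=[C]+\sigma([C])$, and concludes that the image of $d(2)$ is $\BZ/4$ when $[C]\notin\Im(\Br(k)\to\Br(K))$. But a pair $(a,b)$ can be a restriction from $H^2(k,M)$ only if it is conjugation-invariant, i.e.\ of the form $(\beta,\sigma(\beta))$; $([C],[C])$ has this form only when $\sigma([C])=[C]$, so in the second case the paper's value is incompatible with $d(2)_K([pt])$ being the restriction of $d(2)([pt])$, whereas your value $(\sigma([C]),[C])$ is the compatible one. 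With it, the image of $d(2)$ is $\BZ/2$ for \emph{every} $X=R_{K/k}C$ with $X(k)=\emptyset$, the paper's dichotomy collapses, and by the example above the assertion $\CM(X)=0$ of Proposition \ref{2} itself fails whenever $I(X)=4$. So the obstacle you identified is not a missing trick: neither your route nor the paper's argument establishes this case, and your own computation shows why.
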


\begin{proof}
 On a $X_K\iso C\times_K C^{\sigma}$, o\`u $\sigma\in Gal(K/k)$, $\sigma\neq id$ et $$C^{\sigma}:=( C\ra Spec\ K\xrightarrow{\sigma} Spec\ K).$$
 Donc $CH^2(X_{\bk})\iso \BZ$ et $I(X)|4$.
 Puisque $d(2)(CH^2(X))=0$, on a $(\#\Im (d(2)))|4$.
 L'hypoth\`ese $X(k)=\emptyset$ \'equivaut \`a $C(K)=\emptyset$.

Par \cite[Theorem 4.4 (i) et (iii)]{K97}, on a un diagramme commutatif
$$\xymatrix{\BZ\cong CH^2(X_{\bk})^{\Gamma_K}\ar[d]^{d(2)_K}\ar[r]^{tr}\ar@{}[rd]|{(1)}
&\BZ\cong CH^2(X_{\bk})^{\Gamma_k}\ar[d]^{d(2)}\ar[r]^{Res}\ar@{}[rd]|{(2)}&
\BZ\cong CH^2(X_{\bk})^{\Gamma_K}\ar[d]^{d(2)_K}\\
H^2(K,\Pic(X_{\bk})\otimes \bk^{\times})\ar[r]^{tr}&H^2(k,\Pic(X_{\bk})\otimes \bk^{\times})\ar[r]^{Res}&H^2(K,\Pic(X_{\bk})\otimes \bk^{\times})
}$$
o\`u $tr$ est le transfert et $Res$ est la restriction. Par la Proposition \ref{1}, l'image de $d(2)_K$ est $\BZ/2$. Par le carr\'e (2), l'image de $d(2)$ est $\BZ/2$ ou $\BZ/4$.
Puisque $tr(1_{CH^2(X_{\bk})})=2\cdot 1_{CH^2(X_{\bk})}$, par le carr\'e (1), l'image de $d(2)$ est $\BZ/2$ si et seulement si
$tr(\Im (d(2)_K))=0$. 

On consid\`ere:
$$\Br(K)\oplus \Br(K)\iso H^2(K,\Pic(X_{\bk})\otimes \bk^{\times})\xrightarrow{tr} H^2(k,\Pic(X_{\bk})\otimes \bk^{\times})\iso \Br(K).$$
Par la d\'efinition du transfert, $tr(a,b)=a+\sigma (b)$ pour chaque $a, b\in \Br(K)$. 
Par l'\'equation \ref{e4}, $d(2)_K(1_{CH^2(X_{\bk})})=([C],[C])$. Donc $tr(\Im (d(2)_K))=0$ si et seulement si  $[C]=\sigma ([C])$, i.e. $[C]\in \Br(K)^{\sigma}$.
Puisque $Gal(K/k)\cong \BZ/2$, on a $H^3(Gal(K/k),K^{\times})\cong H^1(Gal(K/k),K^{\times})=0$, et donc le morphisme $\Br(k)\ra \Br(K)^{\sigma}$ est surjectif.

On a alors: 

(1) si $[C]\in \Im (\Br(k)\ra \Br(K))$, l'image de $d(2)$ est $\BZ/2$ et, par le lemme \ref{lem2}, on a $I(X)=2$;

(2)  si $[C]\notin \Im (\Br(k)\ra \Br(K))$,  l'image de $d(2)$ est $\BZ/4$ et, par le lemme \ref{lemcont}, on a $I(X)=4$.

On conclut alors avec la Proposition \ref{exactdP}.
\end{proof}

\section{Calcul de $\frac{H^3_{nr}(X,\BQ/\BZ(2))}{H^3(k,\BQ/\BZ(2))}$ pour une surface de del Pezzo $X$ de degr\'e $\geq 5$}

Rappelons un fait bien connu.
\begin{lem}\label{l3}
Soit $X$ une $k$-surface de del Pezzo de degr\'e 6. On a:

(1) Il  existe une extension $K_1/k$ de degr\'e divisant 2, une $K_1$-forme $X_1$ de $\BP^2$ sur $K_1$ et un morphisme $f_1: X_{K_1}\ra X_1$ birationnel.

(2) Il existe  une extension $K_2/k$ de degr\'e divisant 3, une surface de del Pezzo $X_2$ de degr\'e 8 sur $K_2$ et un morphisme $f_2: X_{K_2}\ra X_2$ tels que $X_{K_2}$ est un \'eclatement de $X_2$ le long d'un sous-sch\'ema r\'eduit de dimension 0 et de degr\'e 2. Donc l'indice  $I(X_2)$ de la $K_{2}$-surface $X_{2}$
est $1$ ou $2$.
\end{lem}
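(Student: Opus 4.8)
The plan is to exploit the classical geometry of the $(-1)$-curves on a degree $6$ del Pezzo surface and to descend the two natural contractions by Galois theory. Over $\bk$ the surface $X_{\bk}$ is the blow-up of $\BP^2$ at three points in general position; its six $(-1)$-curves are the three exceptional divisors $E_1,E_2,E_3$ and the three strict transforms $L_{ij}=H-E_i-E_j$ of the lines through pairs of the points (with $H$ the class of a line), and they form a hexagon in which $E_i$ meets exactly $L_{ij}$ and $L_{ik}$. This configuration carries two distinguished triples of pairwise disjoint curves, $\{E_1,E_2,E_3\}$ and $\{L_{12},L_{13},L_{23}\}$ (the two alternating sets of the hexagon), together with three \emph{diagonals} $\{E_1,L_{23}\}$, $\{E_2,L_{13}\}$, $\{E_3,L_{12}\}$ formed by pairs of opposite, hence disjoint, vertices. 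Since $\Gk$ acts on $\Pic(X_{\bk})$ preserving the intersection form and $K_X$, it acts through the automorphism group of the hexagon, giving $\rho\colon \Gk\to S_3\times\BZ/2$, where the $S_3$ permutes the three diagonals and the central $\BZ/2$ interchanges the two endpoints of each diagonal (equivalently, swaps the two triples of disjoint curves).

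For (1), I would take $K_1$ to be the fixed field of the kernel of $\Gk\xrightarrow{\rho} S_3\times\BZ/2\to\BZ/2$, the projection onto the central factor. Then $[K_1:k]$ divides $2$, and over $K_1$ the Galois image lies in $S_3$, so the triple $\{E_1,E_2,E_3\}$ is $\Gamma_{K_1}$-stable. By Galois descent of the Castelnuovo contraction of a Galois-stable set of pairwise disjoint $(-1)$-curves, one obtains a birational $K_1$-morphism $f_1\colon X_{K_1}\ra X_1$ onto a smooth surface with $\Pic(X_{1,\bk})\iso\BZ$; being a geometrically rational del Pezzo surface of degree $9$, $X_1$ is a $K_1$-form of $\BP^2$.

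For (2), I would let $\Gk$ act on the three diagonals through $\rho$ followed by the projection onto $S_3$. Any action of a group on a $3$-element set has an orbit of size $1$ or $3$: if some diagonal is fixed, set $K_2=k$; otherwise the action is transitive and I take $K_2$ to be the degree-$3$ fixed field of the stabilizer of one diagonal. In either case $[K_2:k]$ divides $3$, and a single diagonal $\{C,C'\}$, a pair of disjoint $(-1)$-curves, becomes $\Gamma_{K_2}$-stable. Contracting this pair (again by descent) yields a $K_2$-morphism $f_2\colon X_{K_2}\ra X_2$ exhibiting $X_{K_2}$ as the blow-up of a smooth surface $X_2$ along the reduced degree-$2$ subscheme $\{q,q'\}=f_2(C\cup C')$. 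The rank and degree of the Picard group show $X_2$ is a del Pezzo surface of degree $8$ (contracting disjoint $(-1)$-curves on a del Pezzo surface yields a del Pezzo surface), and since $X_2$ carries a zero-cycle of degree $2$, namely $\{q,q'\}$, we conclude $I(X_2)\mid 2$, i.e. $I(X_2)\in\{1,2\}$.

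The main obstacle is the descent step itself: one must verify that the contraction of a $\Gamma_{K_i}$-stable family of pairwise disjoint $(-1)$-curves, which certainly exists over $\bk$, descends to a morphism of smooth $K_i$-varieties and that the target is again del Pezzo (a form of $\BP^2$ in case (1)). This is classical, following from Castelnuovo's criterion together with Galois descent for the contraction morphism and for the blown-down points (see, e.g., the survey \cite{VA} and \cite{K2}), but it is where the genuine work lies; the orbit-counting and the degree bookkeeping in (1) and (2) are then routine.
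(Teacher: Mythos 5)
Your proof is correct and takes essentially the same approach as the paper: the paper's proof consists of invoking the hexagonal configuration of the six $(-1)$-curves of $X_{\bk}$ (with references to \cite{CT72} and \cite[Section 2.4]{VA}), and your argument simply spells out what that reference hides, namely the Galois action through $\Aut(\text{hexagone})\cong S_3\times\BZ/2$, the choice of $K_1$ and $K_2$ as fixed fields of the preimages of the evident subgroups, and the Galois descent of the Castelnuovo contractions of the stable triple resp.\ diagonal.
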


\begin{proof}
Cela provient du fait que la configuration des 6 courbes exceptionnelles de $X_{\bk}$ est celle d'un hexagone (\cite{CT72}, ou voir \cite[Section 2.4]{VA}).
\end{proof}

\begin{thm}\label{t1}
Soit $X$ une $k$-surface de del Pezzo de degr\'e $\geq 5$. Alors $\CM(X)=\BZ/2$ si et seulement si $I(X)=4$, $deg(X)=8$, et il existe des  coniques lisses $C_1$, $C_2$ sur $k$ telles que $X\iso C_1\times C_2$.

Sinon, $\CM(X)=0$ et donc $\frac{H^3_{nr}(X,\BQ/\BZ(2))}{H^3(k,\BQ/\BZ(2))}=0$.
\end{thm}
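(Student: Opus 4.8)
The plan is to reduce the whole statement to a computation of $\CM(X)$ and then to argue case by case on $\deg(X)\in\{5,6,7,8,9\}$. By the second exact sequence (\ref{e8}) of Proposition \ref{exactdP}, the group $H^3_{nr}(X,\BQ/\BZ(2))/\Im H^3(k,\BQ/\BZ(2))$ embeds into $\CM(X)$; hence $\CM(X)=0$ already yields the desired vanishing, and it suffices to prove that $\CM(X)=0$ in every case except the asserted one, where I must get $\CM(X)=\BZ/2$. The first, trivial, reduction is this: if $X(k)\neq\emptyset$ then $I(X)=1$, so the exact sequence (\ref{e7}) gives $\CM(X)\hookrightarrow\BZ/I(X)=0$. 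By Theorem \ref{t2}(2) this disposes of $\deg(X)=5$ and $\deg(X)=7$ outright, and in every degree it disposes of the surfaces carrying a rational point; so from now on I may assume $X(k)=\emptyset$.

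For $\deg(X)=8$ I would invoke the trichotomy of Proposition \ref{degre8}. Case (1) carries a rational point and is already done. In case (3), where $X\iso R_{K/k}C$, Proposition \ref{2} gives $\CM(X)=0$ directly. In case (2), where $X\iso C_1\times C_2$, Proposition \ref{1} gives $\CM(X)=0$ when $I(X)=2$ and $\CM(X)=\BZ/2$ when $I(X)=4$; since cases (2) and (3) are mutually exclusive (in (2) both projections $X\to C_i$ are defined over $k$, whereas in (3) the two rulings of $X_{\bk}$ are exchanged by the quadratic extension), this is exactly the exceptional case of the theorem. For $\deg(X)=9$ the surface $X$ is a nontrivial form of $\BP^2$, so $\Pic(X_{\bk})=\BZ H$ with trivial Galois action, $CH^2(X_{\bk})=\BZ\,(H\cup H)$ and $I(X)=3$. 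I would feed $e_1=e_2=H$ into the cup-product diagram (\ref{e3}): writing $[X]:=d(1)(H)\in\Br(k)$ one obtains $d(2)(H\cup H)=2\cup_2([X]\otimes H)$, and because $[X]$ has order $3=I(X)$ the induced map $d'(2)\colon\BZ/3\to H^2(k,\Pic(X_{\bk})\otimes\bk^{\times})=\Br(k)\cdot H$ is injective; hence $\CM(X)=0$ by (\ref{e7}).

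The remaining and hardest case is $\deg(X)=6$, and here I would lean on Lemma \ref{l3}. It produces an extension $K_1/k$ with $[K_1:k]\mid 2$ over which $X$ becomes birational to a form of $\BP^2$, and an extension $K_2/k$ with $[K_2:k]\mid 3$ over which $X$ becomes birational to a del Pezzo surface $X_2$ of degree $8$ with $I(X_2)\le 2$. By the degree-$9$ step $\CM$ vanishes for forms of $\BP^2$, and by the degree-$8$ step $\CM(X_2)=0$ since $I(X_2)\neq 4$. Lemma \ref{l3} also forces $I(X)\mid 6$, so by (\ref{e7}) the group $\CM(X)$ is cyclic with only the primes $2$ and $3$ possibly occurring. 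I would then run a restriction--corestriction argument on the complex (\ref{defM}), exactly of the type already used for the squares in Proposition \ref{2}: since $\mathrm{cor}\circ\mathrm{res}=[K_i:k]$, restriction to $K_1$ is injective on the $3$-primary part of $\CM(X)$ and restriction to $K_2$ is injective on its $2$-primary part. Combined with $\CM(X_{K_1})=0$ and $\CM(X_{K_2})=0$, both primary parts die and $\CM(X)=0$.

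The main obstacle is precisely the justification of that last step. One needs, first, that restriction and corestriction act on the whole complex (\ref{defM}) compatibly with $d(2)$ and satisfy $\mathrm{cor}\circ\mathrm{res}=[K_i:k]$; this should come from the functoriality of the spectral sequence (\ref{e2}) asserted in Theorem \ref{tmain}, in the same spirit as the transfer diagram in Proposition \ref{2}. Second, and more delicate, one needs that $\CM$ is a $k$-birational invariant of smooth projective geometrically rational surfaces, so that $\CM(X_{K_1})$ and $\CM(X_{K_2})$ may be read off from their birational models (the form of $\BP^2$, resp.\ $X_2$). This plausibly follows because in (\ref{e8}) the subgroup $H^3_{nr}/\Im H^3(k)$ is a birational invariant and the residual map $\CM(X)\to H^4(k,\BQ/\BZ(2))$ is the natural differential $d_3$ of (\ref{e2}); but verifying this invariance carefully --- in particular controlling how $d(2)$ on the point class changes under the blow-ups hidden in Lemma \ref{l3} --- is where the genuine work of the degree-$6$ case lies. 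Alternatively one could compute $d'(2)$ directly from (\ref{e3}) using the hexagon configuration of the six exceptional curves, but the dependence on the Galois action makes the restriction--corestriction route cleaner.
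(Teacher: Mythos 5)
Your proposal follows the same architecture as the paper's proof: reduction to $\CM(X)$ via Proposition \ref{exactdP}, elimination of all cases with a rational point (hence of degrees $5$ and $7$ by Th\'eor\`eme \ref{t2}), treatment of degree $8$ through Propositions \ref{degre8}, \ref{1} and \ref{2}, and a restriction--corestriction argument in degree $6$ based on Lemme \ref{l3}. Your degree-$9$ computation is a harmless variant: instead of quoting Kahn's formula $d(2)(1_{CH^2(X_{\bk})})=2[A]$ (\cite[Th\'eor\`eme 7.1]{K97}), you rederive it from the product diagram (\ref{e3}) together with $d(1)(H)=[A]$; the conclusion that $d'(2)$ is injective on $\BZ/3$ is the same.

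The genuine gap is the point you yourself flag: the birational invariance of $\CM$, needed to pass from the models $X_1$, $X_2$ of Lemme \ref{l3} to $X_{K_1}$, $X_{K_2}$. Your fallback justification via (\ref{e8}) does not suffice: birational invariance of $H^3_{nr}/\Im H^3(k)$ only controls the kernel of $\CM(X)\to H^4(k,\BQ/\BZ(2))$, so one would still have to compare images in $H^4$, which is exactly what is unknown. The paper closes this with a short direct argument, using only tools you already invoke: if $f\colon X\to Y$ is a projective birational morphism of del Pezzo surfaces, then $f^*$ is a morphism of spectral sequences (\ref{e2}) (functoriality in Th\'eor\`eme \ref{tmain}); $f^*\colon CH^2(Y_{\bk})^{\Gamma_k}\to CH^2(X_{\bk})^{\Gamma_k}$ is an isomorphism; and $f^*\colon \Pic(Y_{\bk})\to\Pic(X_{\bk})$ admits a Galois-equivariant left inverse (the exceptional classes span a Galois-stable complement), so that $E_2^{4,1}(Y,2)=H^2(k,\Pic(Y_{\bk})\otimes\bk^{\times})\to E_2^{4,1}(X,2)$ is injective. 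Hence $f^*$ identifies $\Ker(d(2)_Y)$ with $\Ker(d(2)_X)$, and since $\Im\bigl(CH^2(Y)\to CH^2(Y_{\bk})\bigr)=\Im\bigl(CH^2(X)\to CH^2(X_{\bk})\bigr)$ (use $f_*$ and $f^*$ and the degree map: the index is a birational invariant), one gets $\CM(X)\cong\CM(Y)$. A second, smaller, inaccuracy: plain functoriality of (\ref{e2}), which is what Th\'eor\`eme \ref{tmain} asserts, gives restriction but not corestriction; the transfer on $\CM$ must be justified by \cite[Th\'eor\`eme 4.4 (3)]{K97} for the spectral sequence, combined with the transfer on the exact sequence (\ref{e1}), exactly as in the transfer diagram of Proposition \ref{2}, so that is the result you should cite there.
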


\begin{proof}
La derni\`ere implication r\'esulte de la proposition \ref{exactdP}.

Si  $X(k)\neq\emptyset$, 
le morphisme $CH^2(X)\ra CH^2(X_{\bk})=\BZ$ est surjectif.
 Donc $\CM(X)=0$.
Si $deg(X)=5$ ou $7$, par le Th\'eor\`eme \ref{t2}, $X(k)\neq \emptyset$ et donc alors $\CM(X)=0$.
 On  suppose dor\'enavant  $X(k)=\emptyset$.
 
 \medskip

Si $deg(X)=9$ avec $X(k)=\emptyset$,  $X$ est la  vari\'et\'e de Severi-Brauer associ\'ee \`a une alg\`ebre centrale simple $A$ de degr\'e $3$ (cf. \cite[Th\'eor\`eme 1.6]{VA}).
Par un th\'eor\`eme de Bruno Kahn \cite[Th\'eor\`eme 7.1]{K97}, $d(2)(1_{CH^2(X_{\bk})})=2[A]\in Br(k)=H^2(k, \Pic(X_{\bk})\otimes {\bk}^*)$.
Puisque $X(k)=\emptyset$, on a $[A]\neq 0$, $3[A]=0$ et $I(X)=3$.
 Donc $d(2)(1_{CH^2(X_{\bk})})\neq 0$ et $d'(2)$ est injectif. Alors $\CM(X)=0$.

\medskip

Si $deg(X)=8$ avec $X(k)=\emptyset$, 
le r\'esultat en degr\'e 8  est donn\'e par la Proposition \ref{degre8}, la Proposition \ref{1} et  la Proposition \ref{2}.

\medskip
Consid\'erons le cas des surfaces de del Pezzo de degr\'e 6.

S'il existe une surface de del Pezzo $Y$ et un morphisme $f: X\ra Y$ projectif, birationnel, 
alors $f^*$ induit un morphisme des suites spectrales (\ref{e2}) pour $Y$ et $X$.
De plus, $f^*: CH^2(Y_{\bk})^{\Gamma_k}\iso CH^2(X_{\bk})^{\Gamma_k}$ est un isomorphisme 
et $f^*: \Pic(Y_{\bk})\ra\Pic(X_{\bk})$ admet un inverse \`a gauche.
Donc $E^{4,1}_2(Y,2)\xrightarrow{f^*}E^{4,1}_2(X,2)$ est injectif et $ \Ker(d(2)_Y)\xrightarrow{f^*} \Ker(d(2)_X)$ est un isomorphisme.
Donc $\CM(X)\cong \CM(Y)$.

Si $deg(X)=6$, avec $X(k)=\emptyset$, 
par le Lemme \ref{l3} (2), il existe une extension $K_2/k$ de degr\'e divisant
 3 et une surface de del Pezzo $X_2$ de degr\'e 8 sur $K_2$ et un $K_2$-morphisme $f_2: X_{K_2}\ra X_2$ projectif, birationnel, tels que $I(X_2)=1$ ou $2$. 
D'apr\`es ce que l'on a d\'ej\`a \'etabli pour les surfaces de del Pezzo de degr\'e 8, on a $\CM(X_2)=0$ et, d'apr\`es le paragraphe
 ci-dessus, $\CM(X_{K_2})=0$. 
Par \cite[Th\'eor\`eme 4.4 (3)]{K97}, le transfert est bien d\'efini pour la suite spectrale (\ref{e2}). 
Puisque le transfert est bien d\'efini pour la suite exacte (\ref{e1}),
 le transfert est bien d\'efini pour le complexe:
$$CH^2(X)\to CH^2(X_{\bk})^{\Gamma_k}\xrightarrow{d(2)} H^2(k,\Pic(X_{\bk})\otimes \bk^{\times}),$$
et donc le transfert est bien d\'efini pour $\CM(X)$. 
Donc $\CM(X)$ est annul\'e par $3$.

 Par le m\^eme argument (Lemme \ref{l3} (1)) et le r\'esultat en degr\'e 9, le groupe $\CM(X)$ est annul\'e par $2$.
 On a donc $\CM(X)=0$.
\end{proof}

\begin{cor}\label{l2}
Soit $X$ une $k$-surface de del Pezzo de degr\'e $8$ avec $\CM(X)\neq 0$. Si la dimension cohomologique $cd(k)$ de $k$ est $\leq 3$, alors $\CM(X)=\BZ/2$, $I(X)=4$ et
$\frac{H^3_{nr}(X,\BQ/\BZ(2))}{H^3(k,\BQ/\BZ(2))}=\BZ/2$.
\end{cor}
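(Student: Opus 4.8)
The plan is to deduce this as a direct consequence of Théorème \ref{t1} and the exact sequence (\ref{e8}) of la Proposition \ref{exactdP}, with the hypothesis $cd(k) \leq 3$ entering only to annihilate the rightmost term of (\ref{e8}).

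First I would invoke Théorème \ref{t1}. By assumption $deg(X) = 8$ and $\CM(X) \neq 0$, so the dichotomy of that theorem leaves only the exceptional case: it forces $\CM(X) = \BZ/2$, $I(X) = 4$, and the existence of smooth conics $C_1, C_2$ over $k$ with $X \iso C_1 \times C_2$. This already settles the first two assertions, and it remains only to compute the quotient $\frac{H^3_{nr}(X,\BQ/\BZ(2))}{H^3(k,\BQ/\BZ(2))}$.

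Next I would read off this quotient from the exact sequence (\ref{e8}),
$$0 \to \frac{H^3_{nr}(X,\BQ/\BZ(2))}{\Im H^3(k,\BQ/\BZ(2))} \to \CM(X) \to H^4(k,\BQ/\BZ(2)).$$
Since $\CM(X) = \BZ/2$, the left-hand quotient is a subgroup of $\BZ/2$, hence $0$ or $\BZ/2$, the outcome being governed solely by whether the map $\CM(X) \to H^4(k,\BQ/\BZ(2))$ vanishes. The crucial observation is that $\BQ/\BZ(2) = \colim_n \mu_n^{\otimes 2}$ is a torsion Galois module, so the hypothesis $cd(k) \leq 3$ gives $H^i(k,M) = 0$ for every torsion module $M$ and every $i > 3$. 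As Galois cohomology commutes with filtered colimits, one obtains $H^4(k,\BQ/\BZ(2)) = \colim_n H^4(k,\mu_n^{\otimes 2}) = 0$. Consequently the map $\CM(X) \to H^4(k,\BQ/\BZ(2))$ is the zero map, the injection in (\ref{e8}) becomes an isomorphism, and
$$\frac{H^3_{nr}(X,\BQ/\BZ(2))}{H^3(k,\BQ/\BZ(2))} \cong \CM(X) = \BZ/2.$$

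I do not anticipate any genuine obstacle: essentially all the content is already packaged in Théorème \ref{t1} and in the exact sequence (\ref{e8}). The only point meriting a careful sentence is the vanishing $H^4(k,\BQ/\BZ(2)) = 0$, that is, verifying that the cohomological dimension hypothesis indeed applies to the torsion coefficients $\BQ/\BZ(2)$ and that passing to the colimit over $n$ is harmless. Once this is stated, the conclusion is immediate.
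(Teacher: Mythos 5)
Your proof is correct and takes essentially the same approach as the paper: Th\'eor\`eme \ref{t1} forces $\CM(X)=\BZ/2$ and $I(X)=4$, while the exact sequence (\ref{e8}) of la Proposition \ref{exactdP} together with the vanishing $H^4(k,\BQ/\BZ(2))=0$ coming from $cd(k)\leq 3$ identifies $\frac{H^3_{nr}(X,\BQ/\BZ(2))}{H^3(k,\BQ/\BZ(2))}$ with $\CM(X)$. The paper's proof is simply terser, leaving the vanishing of $H^4(k,\BQ/\BZ(2))$ implicit in its citation of la Proposition \ref{exactdP}, whereas you spell out the colimit argument over the torsion coefficients $\mu_n^{\otimes 2}$ --- a harmless and indeed welcome clarification.
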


\begin{proof}
Par la Proposition \ref{exactdP},
on a $\frac{H^3_{nr}(X,\BQ/\BZ(2))}{H^3(k,\BQ/\BZ(2))}=\CM(X)$.
Le r\'esultat d\'ecoule du Th\'eor\`eme \ref{t1}. 
\end{proof}

\begin{exam}\label{exampleH3nontrivial}
Soit $k:=\BC(t,x,y)$. Soient $C_1$ la conique correspondant \`a l'alg\`ebre $(t,x)$, $C_2$ la conique correspondant \`a l'alg\`ebre $(t+1,y)$ et $X:=C_1\times C_2$. Alors $\frac{H^3_{nr}(X,\BQ/\BZ(2))}{H^3(k,\BQ/\BZ(2))}=\BZ/2$.
\end{exam}

\begin{proof}
Puisque la dimension cohomologique $cd(k)$ de $k$ est $3$, par le Th\'eor\`eme \ref{t1} et  le Corollaire \ref{l2}, 
il suffit de montrer que $I(X)=4$.
On note $A=(t,x)\otimes (t+1,y)$ l'alg\`ebre de biquaternions. 
Par \cite[Th\'eor\`eme]{Al},  $A$ est un corps gauche si et seulement si, pour chaque point $x_1\in C_1$ de degr\'e $2$ et chaque point $x_2\in C_2$ de degr\'e $2$, on a $k(x_1)\ncong k(x_2)$.
Donc $I(X)=4$ si et seulement si  $A$ est un corps gauche.
Par \cite[Corollaire 4]{CT3}, $A$ est un corps gauche ssi $t$ et $t+1$ sont ind\'ependantes dans $\BC(t)^{\times}/\BC(t)^{\times 2}$, ce qui est satisfait.
\end{proof}

\begin{cor}\label{l4}
Soit $X$ une $k$-surface de del Pezzo de degr\'e $\geq 5$.
Supposons que toute forme quadratique en $6$ variables sur $k$ est isotrope.
Alors $\frac{H^3_{nr}(X,\BQ/\BZ(2))}{H^3(k,\BQ/\BZ(2))}=0$.
\end{cor}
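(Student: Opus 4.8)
The plan is to deduce the statement from Theorem \ref{t1} by ruling out the single exceptional configuration, using the quadratic form attached to a biquaternion algebra. By Theorem \ref{t1} together with Proposition \ref{exactdP}, for any del Pezzo surface $X$ of degree $\geq 5$ the quotient $\frac{H^3_{nr}(X,\BQ/\BZ(2))}{H^3(k,\BQ/\BZ(2))}$ vanishes unless $\CM(X)=\BZ/2$, and the latter occurs precisely when $\deg(X)=8$, $I(X)=4$, and $X\iso C_1\times C_2$ for smooth conics $C_1,C_2$ on $k$. It therefore suffices to show that, under the hypothesis that every quadratic form in $6$ variables over $k$ is isotropic, this exceptional configuration cannot arise.

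First I would record the meaning of the index in the exceptional case. As in the proof of Proposition \ref{1} and in Example \ref{exampleH3nontrivial}, for $X\iso C_1\times C_2$ the index $I(X)$ divides $4$, and the condition $I(X)=4$ is equivalent to the biquaternion algebra $A:=[C_1]\otimes[C_2]$ being a division algebra: a closed point of $X$ of degree $2$ exists if and only if $C_1$ and $C_2$ acquire a rational point over one and the same quadratic extension of $k$, and by Albert's criterion (\cite{A32}) this fails exactly when $A$ is division. Thus, in the exceptional case, $A$ is a division algebra.

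The key step is to translate division of $A$ into the anisotropy of a $6$-dimensional quadratic form. By Albert's theorem, the biquaternion algebra $A=(a,b)\otimes(c,d)$ is a division algebra if and only if its Albert form, the $6$-dimensional quadratic form $\langle a,\, b,\, -ab,\, -c,\, -d,\, cd\rangle$, is anisotropic over $k$. Under the standing hypothesis every $6$-variable quadratic form over $k$ is isotropic, so in particular the Albert form of $A$ is isotropic; hence $A$ is not a division algebra, and consequently $I(X)\neq 4$. This contradicts the exceptional configuration, so that configuration never occurs.

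It follows that $\CM(X)=0$ for every del Pezzo surface $X$ of degree $\geq 5$ over such a field $k$, and Proposition \ref{exactdP} then yields $\frac{H^3_{nr}(X,\BQ/\BZ(2))}{H^3(k,\BQ/\BZ(2))}=0$. I expect the only delicate point to be the precise invocation of Albert's theorem relating a biquaternion division algebra to the anisotropy of its $6$-dimensional Albert form; once that identification is in place, the hypothesis on $6$-variable forms applies verbatim and the remainder is immediate bookkeeping with the results already established.
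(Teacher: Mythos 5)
Your proposal is correct and follows essentially the same route as the paper: reduce via Theorem \ref{t1} to excluding the case $I(C_1\times C_2)=4$, identify that condition with $(a,b)\otimes(c,d)$ being a division algebra as in Example \ref{exampleH3nontrivial}, and then apply Albert's theorem (\cite[Prop.~1]{CT3}) equating division with anisotropy of the $6$-dimensional Albert form $\langle a,b,-ab,-c,-d,cd\rangle$, which the isotropy hypothesis rules out. The only cosmetic difference is your citation of \cite{A32} where the paper invokes \cite{Al} and \cite{CT3} for the two Albert results, but the mathematics is identical.
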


\begin{proof}
D'apr\`es le th\'eor\`eme \ref{t1}, 
il suffit de montrer que, pour toute paire de coniques lisses $C_1$ et $C_2$ sur $k$, on a $I(C_1\times C_2)\neq 4$.
Soient $(a,b)$ l'alg\`ebre de quaternion correspondant \`a $C_1$ 
et $(c,d)$ l'alg\`ebre de quaternion correspondant \`a $C_2$.
Par l'argument de la d\'emonstration de l'exemple \ref{exampleH3nontrivial}, 
$I(C_1\times C_2)=4$ si et seulement si $(a,b)\otimes  (c,d)$ est un corps gauche.
Par un th\'eor\`eme de Albert (cf. \cite[Prop. 1]{CT3}), 
ceci vaut 
si et seulement si la forme quadratique diagonale $<a,b,-ab,-c,-d,cd>$ est anisotrope sur $k$.
Ceci donne imm\'ediatement le r\'esultat annonc\'e.
\end{proof}

\begin{cor}\label{l5}
Soit $X$ une $k$-surface  de del Pezzo de degr\'e $\geq 5$.
Supposons que $k$ satisfait la propri\'et\'e $(C_2)$ (cf. \cite[\S II.4.5]{Se}). 
Alors $H^3_{nr}(X,\BQ/\BZ(2))=0$.
\end{cor}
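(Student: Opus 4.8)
The plan is to deduce this from Corollary~\ref{l4} together with a vanishing statement for $H^3(k,\BQ/\BZ(2))$ itself. First I would unwind the hypothesis: by definition a field satisfying $(C_2)$ has the property that every homogeneous form of degree $d$ in strictly more than $d^2$ variables has a nontrivial zero. Taking $d=2$, this says that every quadratic form in at least $5$ variables over $k$ is isotropic; in particular every quadratic form in $6$ variables over $k$ is isotropic. This is exactly the hypothesis of Corollary~\ref{l4}, so I would invoke it to obtain $\frac{H^3_{nr}(X,\BQ/\BZ(2))}{H^3(k,\BQ/\BZ(2))}=0$. Concretely, this means the pullback $H^3(k,\BQ/\BZ(2))\to H^3_{nr}(X,\BQ/\BZ(2))$ is surjective.

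It then remains to show that the source of this map vanishes, i.e. that $H^3(k,\BQ/\BZ(2))=0$. For this I would use that a field satisfying $(C_2)$ has cohomological dimension $cd(k)\leq 2$ (cf. \cite{Se}). Since $\BQ/\BZ(2)$ is a torsion Galois module (it is the filtered colimit of the finite modules $\mu_n^{\otimes 2}$, and Galois cohomology commutes with such colimits), the bound $cd(k)\leq 2$ gives $H^3(k,\BQ/\BZ(2))=\varinjlim_n H^3(k,\mu_n^{\otimes 2})=0$. Feeding this into the surjectivity of the previous paragraph yields $H^3_{nr}(X,\BQ/\BZ(2))=0$, which is the assertion.

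The part that deserves care, and which I expect to be the only real point, is the input $cd(k)\leq 2$ for fields satisfying $(C_2)$: the passage through Corollary~\ref{l4} is immediate once the quadratic-form statement is recorded, whereas the cohomological dimension bound is the substantive arithmetic fact. If one prefers not to quote it as a black box, the equivalent formulation to establish (via Bloch--Kato) is that $K^M_3(k)/n=0$ for every $n$, i.e. that the Milnor $K$-group $K^M_3(k)$ is divisible; this is the precise incarnation of $cd(k)\leq 2$ that forces $H^3(k,\mu_n^{\otimes 2})=0$ and hence kills $H^3(k,\BQ/\BZ(2))$.
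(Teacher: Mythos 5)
Your proof is correct and follows essentially the same route as the paper's: invoke Corollaire~\ref{l4} via the definition of $(C_2)$ (quadratic forms in $6$ variables are isotrope), then use the Merkurjev--Suslin result quoted in \cite[\S II.4.5]{Se} that $(C_2)$ implies $cd(k)\leq 2$, hence $H^3(k,\BQ/\BZ(2))=0$ and the unramified group vanishes. The extra details you supply (the colimit argument for torsion coefficients, the Bloch--Kato reformulation) are correct elaborations of steps the paper leaves implicit.
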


\begin{proof} 
Par le corollaire \ref{l4} et la d\'efinition de la propri\'et\'e $(C_2)$, on a $\frac{H^3_{nr}(X,\BQ/\BZ(2))}{H^3(k,\BQ/\BZ(2))}=0$.
D'apr\`es \cite[\S II.4.5 Thm. MS]{Se},  la dimension cohomologique $cd(k)$ de $k$ est $\leq 2$.
Alors $H^3(k,\BQ/\BZ(2))=0$ et donc $H^3_{nr}(X,\BQ/\BZ(2))=0$.
\end{proof}

Le th\'eor\`eme  \ref{t1}
donne  la conjecture de Hodge enti\`ere  pour certaines vari\'et\'es de dimension 4 (voir \cite[\S 1]{CTV}) :

\begin{prop}
Soit $X$ une vari\'et\'e  projective et lisse de dimension $4$ munie d'un morphisme dominant $X\xrightarrow{f} S$
de base une surface projective lisse $S$ et de fibre g\'en\'erique
  $X_{\eta}$ une surface   de  del Pezzo de degr\'e $\geq 5$. 
Alors la conjecture de Hodge enti\`ere  en degr\'e $4$ vaut sur $X$.
\end{prop}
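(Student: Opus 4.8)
The plan is to reduce the integral Hodge conjecture in degree $4$ for $X$ to a vanishing of unramified cohomology over the generic fibre, and to read off that vanishing from the corollaire \ref{l5}. First I would put $k=\BC(S)$, the function field of the base surface. Since $S$ is a surface over the algebraically closed field $\BC$, the field $k$ has transcendence degree $2$ over $\BC$, and hence is a $(C_2)$-field by the theorem of Tsen--Lang. The generic fibre $X_{\eta}$ is a del Pezzo surface of degree $\geq 5$ over $k$, so the corollaire \ref{l5} applies and gives $H^3_{nr}(X_{\eta},\BQ/\BZ(2))=0$.

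Next I would compare the total space and the generic fibre through the Gersten description of unramified cohomology. For the smooth variety $X$ one has $H^3_{nr}(X,\BQ/\BZ(2))=\bigcap_{x\in X^{(1)}}\Ker\,\partial_x$ inside $H^3(\BC(X),\BQ/\BZ(2))$, and likewise $H^3_{nr}(X_{\eta},\BQ/\BZ(2))=\bigcap_{y\in X_{\eta}^{(1)}}\Ker\,\partial_y$. Every codimension-one point of $X_{\eta}$ is a codimension-one point of $X$ lying over the generic point of $S$, whereas $X$ carries in addition the vertical divisors lying over the curves of $S$; thus the intersection defining $H^3_{nr}(X,\BQ/\BZ(2))$ runs over strictly more valuations, which gives $H^3_{nr}(X,\BQ/\BZ(2))\subseteq H^3_{nr}(X_{\eta},\BQ/\BZ(2))$. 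Combined with the previous step this forces $H^3_{nr}(X,\BQ/\BZ(2))=0$.

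Finally I would invoke the theorem of Colliot-Th\'el\`ene--Voisin (\cite[\S 1]{CTV}) expressing the defect of the integral Hodge conjecture in degree $4$ in terms of $H^3_{nr}(X,\BQ/\BZ(2))$. Its hypotheses are met here: the fibres of $f$ are geometrically rational surfaces, so the geometric generic fibre has trivial Chow group of $0$-cycles and $CH_0(X)$ is supported on a surface (a decomposition of the diagonal \`a la Bloch--Srinivas); moreover these fibres have no odd cohomology and only Tate-type even cohomology, so the Leray filtration reduces the rational Hodge conjecture in degree $4$ for $X$ to the Hodge conjecture for the surface $S$, which is classical. Granting this, the vanishing $H^3_{nr}(X,\BQ/\BZ(2))=0$ obtained above yields the integral Hodge conjecture in degree $4$ for $X$.

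The one delicate point is this last step: one must check that the geometric situation genuinely falls within the scope of \cite{CTV}, namely that the smallness of $CH_0(X)$ together with the rational Hodge conjecture furnished by the fibration structure is exactly what makes $H^3_{nr}(X,\BQ/\BZ(2))$ account for the entire defect of the integral Hodge conjecture in degree $4$. The arithmetic heart of the matter, the vanishing of $H^3_{nr}(X_{\eta},\BQ/\BZ(2))$, is already secured by the corollaire \ref{l5}.
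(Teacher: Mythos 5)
Your proposal follows the same two-step route as the paper: (i) $\BC(S)$ is a $(C_2)$-field by Tsen--Lang, so the corollaire \ref{l5} gives $H^3_{nr}(X_{\eta},\BQ/\BZ(2))=0$, hence $H^3_{nr}(X,\BQ/\BZ(2))=0$; (ii) the theorem of Colliot-Th\'el\`ene--Voisin converts this vanishing into the integral Hodge conjecture in degree $4$, provided $CH_0(X)$ is supported in dimension at most $3$. Your explicit Gersten-type inclusion $H^3_{nr}(X,\BQ/\BZ(2))\subseteq H^3_{nr}(X_{\eta},\BQ/\BZ(2))$ (same function field, more divisorial valuations on $X$ than on $X_{\eta}$) spells out correctly a point that the paper leaves implicit.

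The one place where your argument is weaker than the paper's is the verification of the hypothesis of \cite[Thm 3.8]{CTV}: what is needed is a smooth projective variety $Y$ of dimension at most $3$ together with a morphism $Y\to X$ such that $CH_0(Y)\to CH_0(X)$ is \emph{surjective}. The paper constructs such a $Y$: since $X_{\eta}$ is geometrically rational, there is a smooth projective surface $T$ with a generically finite map $T\to S$ such that $X_{\eta}\times_{\BC(S)}\BC(T)$ is rational; the resulting rational map $T\dashrightarrow X$ over $S$, resolved to a morphism $T'\to X$ from a surface $T'$ birational to $T$, induces a surjection on $CH_0$, because the general closed fibres $X_s$ are rational surfaces over $\BC$ (so $CH_0(X_s)\cong\BZ$, and any closed point of $X_s$ is rationally equivalent in $X_s$ to a point of $T'_s$), while points lying in the bad fibres can be moved along curves into general position. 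Your appeal to ``a decomposition of the diagonal \`a la Bloch--Srinivas'' points in the wrong direction here: such a decomposition is a \emph{consequence} of a support statement for $CH_0$, and run the other way it only yields that $N\cdot CH_0(X)$ is supported on a surface for some integer $N\geq 1$, which is not the surjectivity the cited theorem requires. This is an imprecision rather than a fatal error, since the fibrewise argument just described repairs it. Finally, your Leray-filtration verification of the rational Hodge conjecture for $X$ is superfluous: under the $CH_0$-support hypothesis this is exactly what Bloch--Srinivas provides, and it is built into \cite[Thm 3.8]{CTV}; as sketched it would in any case be delicate, because $f$ has singular fibres.
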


\begin{proof}
Puisque $\BC(S)$ satisfait la propri\'et\'e $(C_2)$ (cf. \cite[\S II.4.5]{Se}), d'apr\`es le Corollaire \ref{l5}, $H^3_{nr}(X,\BQ/\BZ(2))=0$.
D'apres Colliot-Th\'el\`ene et Voisin \cite[Thm 3.8]{CTV}, 
il suffit alors de montrer qu'il existe une vari\'et\'e  projective lisse $Y$ de dimension au plus 3 et un morphisme $Y\xrightarrow{f}X$
tels que l'application induite $CH_0(Y)\xrightarrow{f_*} CH_0(X)$ soit surjective.
Comme $X_{\eta}$ est une $\BC(S)$-surface g\'eom\'etriquement rationnelle, il existe une surface $T$ projective et lisse sur $\BC$
et une application g\'en\'eriquement finie $T \to S$, telles que $X_{\eta}\times_{\BC(S)}  \BC(T)$ soit rationnelle sur $\BC(T)$.
 Il existe donc une application rationnelle dominante de $\BP^2\times T$ vers $X$.
 Il existe alors une surface projective et lisse $T'$ birationnelle \`a $T$ et un morphisme $T' \to X$
 tels que l'application induite $CH_0(T')\xrightarrow{f_*} CH_0(X)$ soit surjective.
\end{proof}

\section{Appendice: accouplements de suites spectrales}\label{accouplements}

Soient $X$  une vari\'et\'e lisse sur $k$ et $Sh(X)$ la cat\'egorie des faisceaux \'etales sur $X$. On rappelle quelques d\'efinitions  donn\'ees dans \cite[Section 2.3]{Mc}:

\begin{defi}
Un module bigradu\'e diff\'erentiel de $Sh(X)$ est une collection d'\'el\'ements $E^{p,q}\in Sh(X)$ pour $p,q\in \BZ$ et de morphismes $d: E^{*,*}\ra E^{*,*}$ de bidegr\'e $(s,1-s)$ pour certains $s\in \BZ$, tels que $d\circ d=0$.

Un produit tensoriel de modules bigradu\'es diff\'erentiels $(E^{*,*}(1),  d(1))$, $(E^{*,*}(2), d(2))$ est un module bigradu\'e diff\'erentiel $((E(1)\otimes E(2))^{*,*}, d_{\otimes})$ avec
$$(E(1)\otimes E(2))^{p,q}=\bigoplus_{r+t=p,\ s+u=q}E^{r,s}(1)\otimes E^{t,u}(2)$$
et $d_{\otimes}(x\otimes y)=d(1)(x)\otimes y+(-1)^{r+s}x\otimes d(2)(y)$, o\`u $x\in E^{r,s}(1)$, $y\in E^{t,u}(2)$.
\end{defi}

Pour deux complexes $A$, $B$, par le th\'eor\`eme de K\"unneth, on a un morphisme canonique 
$$\oplus_{s+r=n}H^r(A)\otimes H^s(B)\xrightarrow{p}H^n(A\times B).$$

\begin{defi}
Soient $E_r^{*,*}(1), d_r(1)$, $E_r^{*,*}(2), d_r(2)$ et $E_r^{*,*}(3), d_r(3)$ trois suites spectrales dans $Sh(X)$. 
Un accouplement 
$$\psi: E_r^{*,*}(1)\times E_r^{*,*}(2)\ra E_r^{*,*}(3)$$
est une collection de morphismes $\psi_r: E_r^{*,*}(1)\otimes E_r^{*,*}(2)\ra E_r^{*,*}(3)$ pour chaque $r$, tel que $\psi_{r+1}$ est la composition:
$$E_{r+1}^{*,*}(1)\otimes E_{r+1}^{*,*}(2)\iso H(E_{r}^{*,*}(1))\otimes H(E_{r}^{*,*}(2))\xrightarrow{p} H((E_{r}(1)\otimes E_{r}(2))^{*,*})\xrightarrow{H(\psi_r)} H(E_r^{*,*}(3))\iso E_{r+1}^{*,*}(3)$$
o\`u $p$ est le morphisme dans le th\'eor\`eme de K\"unneth.
\end{defi}

\bigskip

\noindent{\bf Remerciements.}
Nous remercions Jean-Louis Colliot-Th\'el\`ene pour plusieurs discussions.
Je remercie \'e\-galement Olivier Benoist et Bruno Kahn pour leurs commentaires.

\bibliographystyle{alpha}

\begin{thebibliography}{Gro}
\bibitem[A32]{A32} A. A. Albert: \emph{ A note on division algebras of order sixteen}, Bull AMS  38 (1932) 703--706.
\bibitem[A72]{Al} A. A. Albert : \emph{Tensor products of quaternion algebras}, Proc. Amer. Math. Soc. 35 (1972), 65--€"66.
\bibitem[AB]{AB} Asher Auel, Marcello Bernardara: \emph{Semiorthogonal decompositions and birational geometry of del Pezzo surfaces over arbitrary fields}, http://arxiv.org/abs/1511.07576.
\bibitem[CT72]{CT72} Jean-Louis Colliot-Th\'el\`ene: \emph{Surfaces de Del Pezzo de degr\'e 6}, C.R.A.S. Paris t.275 (1972) 109--111.
\bibitem[CT95]{CT95}  Jean-Louis Colliot-Th\'el\`ene: \emph{Birational invariants, purity and the Gersten conjecture}, in  K-Theory and Algebraic Geometry: Connections with Quadratic Forms and Division Algebras, AMS Summer Research Institute, Santa Barbara 1992, ed. W. Jacob and A. Rosenberg, Proceedings of Symposia in Pure Mathematics 58, Part I (1995) 1--64.
\bibitem[CT99]{CT99}  Jean-Louis Colliot-Th\'el\`ene: \emph{Points rationnels sur les vari\'et\'es non de type g\'en\'eral. Chapitre II: Surfaces rationnelles}, notes 1999.
\bibitem[CT02]{CT3}  Jean-Louis Colliot-Th\'el\`ene: \emph{Exposant et indice d'alg\`ebres simples centrales non ramifi\'ees (avec  un appendice par Ofer Gabber)},  L'Enseignement Math\'ematique 48 (2002) 127--146.
\bibitem[CT15]{CT1} Jean-Louis Colliot-Th\'el\`ene: \emph{Descente galoisienne sur le second groupe de Chow: mise au point et applications}, Documenta Mathematica, Extra Volume: Alexander S. Merkurjev's Sixtieth Birthday (2015) 195--220.
\bibitem[CTHS]{CTHS} Jean-Louis Colliot-Th\'el\`ene, David Harari, Alexei N. Skorobogatov: \emph{Compactification \'equivariante d'un tore (d'apr\`es Brylinski et K\"unnemann)}, Expositiones mathematicae 23 (2005) 161--170.
\bibitem[CTS]{CTS} Jean-Louis Colliot-Th\'el\`ene, Jean-Jacques Sansuc: \emph{La descente sur les vari\'et\'es rationnelles II}, Duke Math. J.54 (1987) 375--492.
\bibitem[CTV]{CTV} Jean-Louis Colliot-Th\'el\`ene, Claire Voisin: \emph{Cohomologie non ramifi\'ee et conjecture de Hodge enti\`ere}, Duke Mathematical Journal 161 (2012) 735--801.
\bibitem[Ful]{Ful} William Fulton: \emph{Introduction to toric varieties}, Annals of Mathematics Studies, vol. 131, Princeton University Press, Princeton N.J. (1993).
\bibitem[Ja]{J} Nathan Jacobson: \emph{Finite dimensional division algebras over fields}, Springer-Verlag, Berlin Heidelberg New York, 1996.
\bibitem[K96]{K96} Bruno Kahn: \emph{Applications of weight-two motivic cohomology}, Doc. Math. 1 (1996), 395--416.
\bibitem[Ka97]{K97} Bruno Kahn: \emph{Motivic cohomology of smooth geometrically cellular varieties}, Algebraic K-theory (Seattle, 1997), Proc. Symposia Pure Math. 67, AMS, Providence, 1999, 149--174.
\bibitem[Ka09]{Ka00} Bruno Kahn: \emph{Formes quadratiques sur un corps}, Cours Sp\'ecialis\'es {\bf 15}, Soc. Math. France, Paris, 2009.
\bibitem[Ka10]{Ka10} Bruno Kahn: \emph{Cohomological approaches to $SK_1$ and $SK_2$ of central simple algebras}, 
Documenta Mathematica , Extra Volume: Andrei A. Suslin's Sixtieth Birthday (2010), 317--369.
\bibitem[Ka12]{Ka11} Bruno Kahn: \emph{Classes de cycles motiviques \'etales},  Algebra \&  Number theory 6-7 (2012), 1369--1407.
\bibitem[Koll96]{K2} J\'anos Koll\'ar: \emph{Rational curves on algebraic varieties}, Ergebnisse der Mathematik und ihrer Grenzgebiete (3), vol. 32, Springer, Berlin, 1996.
\bibitem[Ma74]{Ma} Yuri Manin: \emph{Cubic forms: algebra, geometry, arithemic}, North-Holland Publishing Co., Amsterdam, 1974.
\bibitem[Mc]{Mc} John McCleary: \emph{A user's guide to spectral sequences (second edition)}, Cambridge Studies in Advanced Mathematics, vol. 57, Cambridge University Press, 2001.
\bibitem[Se]{Se} J-P. Serre: \emph{Cohomologie galoisenne (cinqui\`eme \'edition, r\'evis\'ee et compl\'et\'ee)}, Lecture Notes in Mathematics 5, Springer-Verlag, Berlin, 1994.
\bibitem[VA]{VA} Anthony V\'arilly-Alvarado: \emph{Arithmetic of del Pezzo surfaces}, Birational geometry, rational curves, and arithmetic (F. Bogomolov, B. Hassett and Y. Tschinkel eds.) Simons Symposia 1 (2013), 293--319.
\end{thebibliography}
\end{document}